\newcommand{\al}[1]{\textcolor{black}{#1}}
\begin{document}
\title{Online Optimization Problems with Functional Constraints under Relative Lipschitz Continuity and Relative Strong Convexity Conditions\thanks{The research was supported by Russian Science Foundation (project No. 21-71- 30005), https://rscf.ru/en/project/21-71-30005/.}}
\titlerunning{Online Optimization under Relative Lipschitz Conditions}

\author{Oleg Savchuk \inst{1,2} \orcidID{0000-0003-3732-1855} \and Fedor Stonyakin  \inst{1,2} \orcidID{0000-0002-9250-4438}\and Mohammad Alkousa \inst{1,3} \orcidID{0000-0001-5470-0182} \and Rida Zabirova \inst{1} \orcidID{0000-0000-0000-0000} \and  Alexander Titov \inst{1} \orcidID{0000-0001-9672-0616} \and Alexander Gasnikov \inst{1,3,4,5} \orcidID{0000-0002-5982-8983}}
	
\authorrunning{Oleg Savchuk et al.}

\institute{Moscow Institute of Physics and Technology, 9 Institutsky lane, Dolgoprudny, 141701, Russia
	\and
	V.\,I.\,Vernadsky Crimean Federal University, 4 Academician Vernadsky Avenue, Simferopol, 295007, Republic of Crimea, Russia\\
	\and
	HSE University, Moscow, 20 Myasnitskaya street, Moscow, 101000, Russia
	\and
	Institute for Information Transmission Problems RAS, 11 Pokrovsky boulevard, 109028, Moscow, Russia
	\and
	Caucasus Mathematical Center, Adyghe State University, 208 Pervomaiskaya street, Maykop, Republic of Adygea, 385000, Russia\\
	\email{oleg.savchuk19@mail.ru, fedyor@mail.ru,
		mohammad.alkousa@phystech.edu, a.a.tytov@gmail.com, zabirova.rr@phystech.edu, gasnikov.av@phystech.edu}}

\maketitle

\begin{abstract}

A few years ago, the optimization field introduced classes of relatively smooth \cite{rel_smooth}, relatively continuous, and relatively strongly convex optimization problems \cite{LuRel,rel_strong_conv}. These concepts have expanded the class of problems to which optimal complexity estimates of gradient-type methods in high-dimensional spaces can be applied. There are known works on online optimization (regret minimization) problems for both relatively Lipschitz and relatively strongly convex problems. In this work, we consider the problem of strongly convex online optimization with convex inequality constraints. A scheme with switching \al{over} productive and non-productive steps is proposed for these problems. The convergence rate of the proposed scheme is proven for the class of relatively Lipschitz and strongly convex minimization problems. Moreover, analogously with the \cite{Hazan-Rakhlin} we study extensions of the considered Mirror Descent algorithms that eliminate the need for a priori knowledge of the lower bound on the (relative) strong convexity parameters of the observed functions. Some numerical experiments were conducted to demonstrate the effectiveness of one of the proposed algorithms with a comparison with another adaptive algorithm for convex online-optimization problems.

\keywords{Online Optimization \and Strongly Convex Programming Problem \and Relatively Lipschitz-Continuous Function \and Relatively Strongly Convex Function \and Mirror Descent \and Regularization. }
\end{abstract}


\section*{Introduction}\label{sec1_introduction}
The development of numerical methods for solving non-smooth online optimization problems presents a great interest nowadays due to the appearance of many applied problems with the corresponding statement \cite{Bubeck-Bianchi,Hazan-Rakhlin,Hazan-Kale,Hazan-2016,Lugosi-Bianchi}. Online optimization plays a key role in solving machine learning, finance, networks, and other problems. As some examples of such problems, we can mention multi-armed bandits, job-shop scheduling and ski rental problems, search games, etc. One of the most popular methods of solving online optimization problems is the Mirror Descent method \cite{Orabona_online_2015}. Let us note, that Mirror Descent can be also applied for solving online optimization problems in a stochastic setting \cite{article:mohammad_kim_2019,Gasnikov_stoc_online_2017},  which allows using an arbitrary, not necessarily $1$--strongly convex, distance-generating function (see \eqref{Bregman}).

Remind, that the online optimization problem represents the problem of minimizing the sum (or the arithmetic mean) of $T$ functionals
$f_t:Q\longrightarrow \mathbb{R}$ ($t = \overline{1, T}$) given on some closed convex set $ Q \subset \mathbb{R}^n $
\begin{equation}\label{Problem_Statement}
\min\limits_{x \in Q} \frac{1}{T}\sum\limits_{t=1}^T f_t(x),  \;\; s.t. 
 \;\; g(x)\leq 0.
\end{equation}
The key feature of the problem statement consists in the possibility of calculating the (sub)gradient $\nabla f_t(x)$ of each functional $f_t$ only once.

Recently, in \cite{Titov_online} there were proposed some modifications of the Mirror Descent method for solving online optimization problems in the case, if all the functions $f_t(x)$ and functional constraint $g(x)$ satisfy Lipschitz condition, i.e. there exists such a constant $M > 0$, that
\begin{equation}
|g(x)-g(y)|\leq M\|x-y\|,
\end{equation}
\begin{equation}
|f_t(x)-f_t(y)|\leq M\|x-y\|, \quad \forall t = \overline{{1, T}}.
\end{equation}
In the case of non-negativity of regret
\begin{equation}
Regret_T:=\sum_{t=1}^T f_t(x_t) - \min_{x\in Q} \sum_{t=1}^T f_t(x),
\end{equation}
these methods are optimal for the considered class of problems accordingly to \cite{Hazan-Kale}, the number of non-productive steps during their work is $O(T)$. In the case of negative regret, the number of non-productive steps for the proposed methods is $O(T^2)$.

Later, in \cite{Titov_online_model} the smoothness class for the applicability of such approaches has been extended by reducing the requirement of Lipschitz continuity of functions to the recently proposed concept of relative Lipschitz continuity \cite{Lu,Nesterov_Relative_Smoothness}.

\begin{definition}\label{1}
Let us call a convex function $f:Q \longrightarrow \mathbb{R}$ $M$-relatively Lipschitz-continuous for some $M>0$, if the following inequality holds
\begin{equation}\label{X}
 \langle\nabla f(x), y - x\rangle + M\sqrt{2V(y,x)} \geq 0, \quad \forall x, y \in Q.
\end{equation}
\end{definition}
This concept has been widely used in many applied problems and has also enabled the proposal of subgradient methods for both non-differentiable and non-Lipschitz Support Vector Machine (SVM) and for problems of Intersection of $n$ Ellipsoids while maintaining optimal convergence rate estimates for the class of simply Lipschitz-continuous functions. It is worth noting that the proposed method also allowed the use of an imprecisely defined function (more exactly, a function that admits a representation in a model form), nevertheless, the method was also optimal.

Let $h: Q \longrightarrow \mathbb{R}$ be a distance-generating function (or prox-function) that is continuously differentiable and convex.
For all $x,y\in Q$ we consider the corresponding Bregman divergence
\begin{equation}\label{Bregman}
    V(y, x) = h(y) - h(x) -\langle \nabla h(x),y-x\rangle.
\end{equation}

In this paper, we improve existing estimates of the convergence rate by considering a class of strongly convex functions and generalize the obtained problem statement to the case of problems with functional constraints.
\begin{definition}\label{2}
A function $f$ over a convex set $Q$ is called $\mu$-strongly convex with respect to a convex function $h$ if
\begin{equation*}
f(x) \ge f(y)+\langle\nabla f(y),x-y\rangle+\mu V(x,y),\quad \forall x,y\in Q,
\end{equation*}
\end{definition}

\al{More precisely, we present} a novel theorem that provides a tighter bound on~regret, in terms of the number of productive steps taken by the algorithm. Specifically, the theorem proves that if the algorithm completes exactly $T$ productive steps and has a non-negative regret, then the number of non-productive steps 
\al{satisfies} $T_J\leq CT$, where $C$ is a constant. This result significantly improves 
\al{existing} convergence rate estimates for the Mirror Descent method with functional constraints. In addition, \al{we obtain} the complexity of the bound in terms of $T$ and \al{some} other problem parameters. This corollary allows us to determine the number of productive steps needed to achieve the desired \al{accuracy} of regret \al{in practice}. 


\al{We also consider} \al{some} modifications \al{of} the Mirror Descent method for solving non-smooth online optimization problems \cite{Hazan-Rakhlin}. Specifically, the paper introduces two algorithms for solving strongly convex minimization problems with and without regularization. The first algorithm, called General-Norm Online Gradient Descent: Relatively Strongly Convex and Relatively Lipschitz-Continuous Case, is based on a convex function $h$ and updates the solution iteratively using predictions and observations of the objective function $f_t$. The second algorithm, called Adaptive General-Norm Online Gradient Descent with Regularization, extends the first algorithm by introducing an adaptive regularization term that depends on a function $d$ that is both relative\al{ly} Lipschitz continuous and relative\al{ly} strongly convex.

For each algorithm, \al{we provide the} theoretical justification of bounds on the regret. These theorems \al{guarantee} upper bounds on the regret for each algorithm and can be used to analyze the performance of the algorithms. Overall, the paper presents a comprehensive framework for solving non-smooth online optimization problems with functional constraints, and the results have practical implications for a broad range of applications.

The paper consists of an introduction and 4 main sections. In Sect. \ref{section_1} we  consider the basic statement of the constrained online optimization problem and propose a modification of the Mirror Descent method for minimizing the arithmetic mean of relatively strongly convex and relatively Lipschitz-continuous functionals, supposing that functional constraint satisfies the same conditions. We also provide a theoretical justification for the convergence rate of the proposed method. Sect. \ref{section_regulirization} is devoted to some modifications of the algorithms, proposed in \cite{Hazan-Rakhlin} for the corresponding class of problems with regularization. In Sect. \ref{section_online_with_constr} we combine the above-mentioned ideas and propose algorithms with switching \al{over} productive and non-productive steps both with and without iterative regularization during the work of algorithms. In Sect. \ref{section_experemints} we present some numerical experiments which demonstrate the effectiveness of one of the proposed algorithms and a comparison with another adaptive algorithm for the considered optimization problems.

To sum it up, the contributions of the paper can be stated as follows:
\begin{itemize}
\item We proposed an optimal method for solving a constrained online optimization problem with relatively strongly convex and relatively Lipschitz-continu-ous objective functionals and functional constraints. For the case of non-negative regret, the number of non-productive steps is bounded by $O(T)$.

\item We proposed two algorithms for solving strongly convex minimization problems with and without regularization based on iteratively updating steps by using some auxiliary functions. \al{Similar to} \cite{Hazan-Rakhlin}, we \al{present} extensions of Mirror Descent that \al{exclude} the need for a priori knowledge of the lower bound on the (relatively) strong convexity parameters of the observed functions. 
	
\item We provided the results of numerical experiments demonstrating the advantages of using the proposed methods.

\end{itemize}

\section{Mirror Descent for Relatively Strongly Convex and Relatively Lipschitz-Continuous Online-optimization Problems with Inequality Constraints}\label{section_1}
In this section, we present a scheme with switching \al{over} productive and non-productive steps for relatively strongly convex and relatively Lipschitz-continuous online optimization problems with inequality constraints. We consider the following \al{strongly convex constrained optimization} problem 
\begin{equation}\label{pr_1}
\min_{x\in Q} \sum_{t=1}^T f_t(x),\quad  g(x) \leq 0,
\end{equation}
where $f_t: Q \longrightarrow \mathbb{R}$ and $g: Q \longrightarrow \mathbb{R}$. Let $x^*$ be a solution of \eqref{pr_1}, i.e.
$$
x^*=\arg\min\limits_{x\in Q} \sum_{t=1}^T f_t(x),\quad  g(x^*) \leq 0.
$$
Let \al{us} denote the set of productive steps $x_t$ for which $g(x_t) \leq \varepsilon$ by $I$, and the set of non-productive steps by $J$. Let $T=|I|, T_J=|J|.$ \al{Let us} consider a subgradient method with switching \al{over} productive and non-productive steps.
\begin{algorithm}
\caption{ Constrained Online Optimization: Mirror Descent for Relatively Lipschitz-Continuous and Relatively Strongly Convex Problems. }
\label{alg:myaaa}
\begin{algorithmic}[1]
\REQUIRE $\varepsilon>0, \mu>0,  T,  x_1 \in Q$.
\STATE $i:= 1, t:=1$;
\REPEAT
\IF{$g(x_t) \leq \varepsilon$}
\STATE $\eta_t = \frac{1}{\mu t}$;
\STATE $x_{t+1}:= Pr_Q\{x_t - \eta_t\nabla f_t(x_t)\}$; \quad ''productive step''
\STATE $i:= i+1$;
\STATE $t:= t+1$;
\ELSE
\STATE $\eta_t = \frac{1}{\mu t}$;
\STATE $x_{t+1}:= Pr_Q\{x_t - \eta_t\nabla g(x_t)\}$;  \quad ''non-productive step''
\STATE $t:= t+1$;
\ENDIF
\UNTIL{$i=T+1$}.
\end{algorithmic}
\end{algorithm}

\begin{theorem}
Suppose that, for each $t$, $f_t$ is an $M_f$-relatively Lipschitz continuous and $\mu$-strongly convex function with respect to the prox-function $h$. Let $g(x)$ be $M_g$-relatively Lipschitz continuous and $\mu$-strongly convex function with respect to $h$. Suppose that Algorithm \ref{alg:myaaa} for
$$
\varepsilon = \dfrac{M^2}{\mu} \dfrac{1 + \ln T}{T}
$$
where $M = \max\{M_f,M_g\},$ works exactly $T$ productive steps and $Regret_T \geq 0$. Then there exists a constant $C \in (2; 3)$ such that the number of non-productive steps \al{satisfies} $T_J \leq CT$, \al{moreover,} the following inequality holds:
$$
Regret_T:=\sum_{t=1}^T f_t(x_t) - \min_{x\in Q} \sum_{t=1}^T f_t(x) \leq  \frac{M^2}{\mu}\Bigg(1+\ln\Big((C+1)T\Big)\Bigg) = O(T\varepsilon),
$$
where $g(x_t) \leq \varepsilon$ for any $t = \overline{1, T}$.
\end{theorem}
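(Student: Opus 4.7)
The plan is to combine a single one-step Mirror Descent estimate---valid on both productive and non-productive iterations---with a telescoping argument over all iterations, and then to extract from the resulting master inequality both the bound on the number of non-productive steps and the regret bound.

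For each iteration $\tau \in \{1, \ldots, T+T_J\}$, let $\phi_\tau$ denote the function whose subgradient is actually used by Algorithm~\ref{alg:myaaa}: $\phi_\tau = f_{i(\tau)}$ on a productive step (where $i(\tau)$ is the index of that productive round among $1, \ldots, T$) and $\phi_\tau = g$ on a non-productive step. In both cases $\phi_\tau$ is $M$-relatively Lipschitz continuous and $\mu$-strongly convex with respect to $h$, where $M = \max\{M_f, M_g\}$. Starting from the optimality condition of the prox-step producing $x_{\tau+1}$, combining the three-point identity for the Bregman divergence, invoking Definition~\ref{1} on the pair $(x_\tau, x_{\tau+1})$ to get $\langle \nabla \phi_\tau(x_\tau), x_\tau - x_{\tau+1}\rangle \leq M\sqrt{2V(x_{\tau+1}, x_\tau)}$, and applying Young's inequality $\eta_\tau M\sqrt{2V(x_{\tau+1},x_\tau)} \leq \eta_\tau^2 M^2 + V(x_{\tau+1},x_\tau)/2$, one arrives at the clean one-step estimate
\begin{equation*}
\eta_\tau \langle \nabla \phi_\tau(x_\tau), x_\tau - x^* \rangle \leq V(x^*, x_\tau) - V(x^*, x_{\tau+1}) + \eta_\tau^2 M^2.
\end{equation*}
Coupling this with $\mu$-strong convexity of $\phi_\tau$ and substituting $\eta_\tau = 1/(\mu\tau)$ yields
\begin{equation*}
\phi_\tau(x_\tau) - \phi_\tau(x^*) \leq \mu(\tau-1)V(x^*, x_\tau) - \mu\tau V(x^*, x_{\tau+1}) + \frac{M^2}{\mu\tau}.
\end{equation*}

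On a productive $\tau$ the left-hand side is $f_{i(\tau)}(x_\tau) - f_{i(\tau)}(x^*)$, and after reindexing the $T$ productive rounds these contributions total exactly $\mathrm{Regret}_T$. On a non-productive $\tau$, the strict inequality $g(x_\tau) > \varepsilon$ enforced by the algorithm together with $g(x^*) \leq 0$ makes the left-hand side exceed $\varepsilon$, so summing over $J$ contributes at least $T_J \varepsilon$. Summing over all $\tau = 1, \ldots, T+T_J$, the Bregman sequence $\mu(\tau-1)V(x^*, x_\tau) - \mu\tau V(x^*, x_{\tau+1})$ telescopes to $-\mu(T+T_J)V(x^*, x_{T+T_J+1}) \leq 0$, and $\sum_{\tau=1}^{T+T_J} 1/\tau \leq 1 + \ln(T+T_J)$. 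This yields the master inequality
\begin{equation*}
\mathrm{Regret}_T + T_J \varepsilon \leq \frac{M^2}{\mu}\bigl(1 + \ln(T+T_J)\bigr).
\end{equation*}

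Using $\mathrm{Regret}_T \geq 0$ and the chosen $\varepsilon = (M^2/\mu)(1+\ln T)/T$, the master inequality collapses to $T_J(1+\ln T) \leq T\bigl(1+\ln(T+T_J)\bigr)$. Writing $x := T_J/T$ this reads $x - 1 \leq \ln(1+x)/(1+\ln T)$; since $1+\ln T \geq 1$ for $T \geq 1$, it forces $\psi(x) := x - 1 - \ln(1+x) \leq 0$. The function $\psi$ is strictly increasing on $[0,\infty)$ (because $\psi'(x) = x/(1+x) \geq 0$) with $\psi(2) = 1 - \ln 3 < 0$ and $\psi(3) = 2 - \ln 4 > 0$, so its unique positive root $x_*$ satisfies $x_* \in (2,3)$. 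Hence $T_J \leq CT$ with $C := x_* \in (2,3)$, and plugging $T_J \leq CT$ back into the master inequality (while dropping the non-negative $T_J \varepsilon$) yields $\mathrm{Regret}_T \leq (M^2/\mu)\bigl(1 + \ln((C+1)T)\bigr)$, which is $O(T\varepsilon)$ as asserted.

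The main obstacle I anticipate is the clean derivation of the one-step estimate under relative Lipschitz continuity rather than ordinary Lipschitz continuity: the whole argument hinges on Young's inequality producing exactly $V(x_{\tau+1}, x_\tau)/2$, which is then absorbed by the $-V(x_{\tau+1}, x_\tau)$ term coming from the three-point identity, leaving a non-positive residual. The remaining pieces---the choice $\eta_\tau = 1/(\mu\tau)$ that aligns the $(1-\eta_\tau\mu)$ factor with the telescoping window, the telescoping itself, and the monotonicity-based extraction of the universal constant $C \in (2,3)$---are routine but careful bookkeeping.
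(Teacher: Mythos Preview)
Your proposal is correct and follows essentially the same route as the paper's proof: derive the one-step inequality $\phi_\tau(x_\tau)-\phi_\tau(x^*)\le \mu(\tau-1)V(x^*,x_\tau)-\mu\tau V(x^*,x_{\tau+1})+M^2/(\mu\tau)$ for both productive and non-productive iterations, telescope to the master inequality $\mathrm{Regret}_T+T_J\varepsilon\le (M^2/\mu)(1+\ln(T+T_J))$, and then use $\mathrm{Regret}_T\ge 0$ with the chosen $\varepsilon$ to force $T_J/T\le 1+\ln(1+T_J/T)$ and hence $T_J\le CT$ with $C\in(2,3)$. The only difference is cosmetic: you spell out the three-point identity plus Young's inequality behind the one-step estimate (which the paper takes as given) and you locate $C$ via $\psi(2)<0<\psi(3)$, whereas the paper simply reports the numerical root $C\approx 2.146$.
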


\begin{proof}
Let us check the auxiliary inequality
\begin{equation}\label{eneq1}
   \sum_{t=1}^T f_t(x_t) - \min_{x\in Q} \sum_{t=1}^T f_t(x) \leq \frac{M^2}{\mu}(1+\ln(T+T_J)) - \varepsilon T_J.
\end{equation}
\begin{enumerate}
\item Taking into account the $M_f$-relative Lipschitz-continuity of the function $f_t$ for  \al{each} productive step we have
\begin{equation*}
    \begin{aligned}
        \eta_t\Big(f_t(x_t) -f_t(x^*)\Big) & \leq \eta_t\Big(\langle\nabla f_t, x_t -x^*\rangle - \mu V(x^*,x_t)\Big)
       \\&
       \leq \eta_t^2M_f^2+V(x^*,x_t)-V(x^*,x_{t+1})-\eta_t\mu V(x^*,x_t).
    \end{aligned}
\end{equation*}
Hence, after dividing both sides of the above inequality by $\eta_t$ we get
\begin{equation}\label{100024}
    \begin{aligned}
        f_t(x_t) -f_t(x^*) & \leq \eta_tM_f^2+\frac{1}{\eta_t}\Big(V(x^*,x_t)-V(x^*,x_{t+1})\Big)-\mu V(x^*,x_t)
        \\&
        = \frac{M_f^2}{\mu t} + \mu tV(x^*,x_t)-\mu V(x^*,x_t)-\mu tV(x^*,x_{t+1})
        \\&
        =\frac{M_f^2}{\mu t} + \mu(t-1) V(x^*,x_t)-\mu tV(x^*,x_{t+1}).
    \end{aligned}
\end{equation}

\item Similarly, taking into account the $M_g$-relative Lipschitz-continuity of $g$ for \al{each} \al{non-}productive step we have $g(x_t) > \varepsilon$ and
\begin{equation*}
    \begin{aligned}
      \eta_t\varepsilon & < \eta_t(g(x_t) - g(x^*))\leq \eta_t \left(\langle\nabla g, x_t -x^*\rangle - \mu V(x^*,x_t) \right)
      \\&
      \leq \eta_t^2M_g^2+V(x^*,x_t)-V(x^*,x_{t+1})-\eta_t\mu V(x^*,x_t).
    \end{aligned}
\end{equation*}
Dividing both sides of the last inequality by $\eta_t$, we get:
\begin{equation}\label{12545}
    \begin{aligned}
        \varepsilon & < g(x_t) - g(x^*) 
        \\& \leq \eta_tM_g^2+\frac{1}{\eta_t}\Big(V(x^*,x_t)-V(x^*,x_{t+1})\Big)-\mu V(x^*,x_t)
        \\&
        = \frac{M_g^2}{\mu t} + \mu tV(x^*,x_t)-\mu V(x^*,x_t)-\mu tV(x^*,x_{t+1})
        \\&
        =\frac{M_g^2}{\mu t} + \mu(t-1) V(x^*,x_t)-\mu tV(x^*,x_{t+1}).
    \end{aligned}
\end{equation}


\item Summing up inequalities \eqref{100024}, \eqref{12545}  \al{over} productive and  \al{non-}productive steps,  for $M = \max\{M_f,M_g\}$, we get
\begin{equation*}
    \begin{aligned}
        & \quad  \sum_{t \in I} \Big(f_t(x_t) - f_t(x^*)\Big) + \sum_{t \in J} \Big(g(x_t) - g(x^*)\Big)
        \\&
        \leq  \sum_{t=1}^{ T+T_J} \left( \frac{M^2}{\mu t} + \mu(t-1) V(x^*,x_t)-\mu tV(x^*,x_{t+1})\right)
        \\&
        \leq \frac{M^2}{\mu}\ln(T + T_J) - \mu(T+T_J)V(x^*,x_{T+T_J})
        \\&
        \leq \frac{M^2}{\mu}\ln(T + T_J).
    \end{aligned}
\end{equation*}

Using the fact, that for non-productive steps
\[ g(x_t) - g(x^*) \ge g(x_t) > \varepsilon, \]
we get an estimate for the sum of the objective functionals:
\begin{equation*}
    \begin{aligned}
        \sum_{t \in I} \Big(f_t(x_t) - f_t(x^*) \Big) &  \leq \frac{M^2}{\mu}\ln(T + T_J) - \sum_{t \in J} \Big(g(x_t) - g(x^*)\Big)
        \\&
        \leq \frac{M^2}{\mu}\ln(T + T_J) - \sum_{t \in J} \varepsilon
        = \frac{M^2}{\mu}\ln(T + T_J) - \varepsilon T_J.
    \end{aligned}
\end{equation*}

\item According to the assumption of non-negativity of the regret, we find
\begin{equation*}
    \begin{aligned}
        0 \leq Regret_T & = \sum_{t=1}^T \Big(f_t(x_t) - f_t(x^*)\Big) = \sum_{t=1}^T f_t(x_t) - \min_{x\in Q}\sum_{t=1}^T f_t(x)
        \\&
        \leq \frac{M^2}{\mu}\Big(1+\ln(T+T_J)\Big) - \varepsilon T_J.
    \end{aligned}
\end{equation*}
\al{Hence}
$\varepsilon T_J \leq \dfrac{M^2}{\mu}\Big(1+\ln(T+T_J)\Big)$ and $\varepsilon = \dfrac{M^2}{\mu} \dfrac{1 + \ln T}{T}$. Therefore, we have
$$
    \dfrac{1 + \ln T}{T} T_J \leq 1+\ln(T+T_J),
$$
$$
    \dfrac{T_J}{T}  \leq \dfrac{1+\ln(T+T_J)}{1 + \ln T}.
$$
Moreover, taking into account
$$
    \ln(T+T_J) = \ln\Bigg(T\left(1+\frac{T_J}{T}\right)\Bigg) = \ln T + \ln\left(1+\frac{T_J}{T}\right),
$$
we get
$$
    \dfrac{T_J}{T}  \leq \dfrac{1+\ln T + \ln(1+\frac{T_J}{T})}{1 + \ln T} \leq 1 + \ln\left(1+\frac{T_J}{T}\right).
$$

Since the linear function grows faster than the logarithmic \al{one}, it is obvious, that \al{for} a sufficiently large $T_J$, the above inequality does not hold, \al{thus}  $\dfrac{T_J}{T}$ is bounded. \al{Therefore,} we proved that $T_J = O(T)$, i.e. \al{there exists} $C>0$ \al{such that} $T_J \leq C T$ or $\frac{T_J}{T} \leq C$:
$$
    \dfrac{T_J}{T} \leq 1 + \ln\left(1+\frac{T_J}{T}\right).
$$

Equality in the latter inequality is achieved when
$$
    \dfrac{T_J}{T} \approx 2,146. 
$$


5. Further, we note that by the definition of $\varepsilon$, we have
$$
\varepsilon = \dfrac{M^2}{\mu} \dfrac{1 + \ln T}{T} = \dfrac{M^2}{\mu T} + \dfrac{M^2}{\mu} \dfrac{\ln T}{T}.
$$
Since $T$ is the number of productive steps and $T_J \leq CT$ is the number of non-productive steps, the total number of steps is $T + T_J \leq (C+1)T$. Therefore
$$
\varepsilon = \dfrac{M^2}{\mu (T + T_J)} + \dfrac{M^2}{\mu} \dfrac{\ln (T + T_J)}{T + T_J} \leq \dfrac{M^2}{\mu (C+1)T} + \dfrac{M^2}{\mu} \dfrac{\ln (C+1)T}{(C+1)T}.
$$
This allows us to bound the regret as follows:
$$
Regret_T := \sum_{t=1}^T f_t(x_t) - \min_{x\in Q} \sum_{t=1}^T f_t(x) \leq
\dfrac{M^2}{\mu} \Big( 1 + \ln (C+1)T \Big).
$$
This shows that the bound on the regret, given by the last inequality holds, which \al{finishes} the proof.
\end{enumerate}
\end{proof}
\begin{remark}
\al{Let us} show that our algorithm will necessarily make \al{at least one} productive steps. Indeed, \al{suppose,}  \al{that the number of productive steps equals zero}, then
\[
    \varepsilon T_J \leq \sum_{t=1}^{T_J} \Big(g(x_t) - g(x^*)\Big) \leq \dfrac{M^2}{\mu}\Big(1+\ln T_J\Big). 
\]
It is obviously, that \al{for} a sufficiently large $T_J$, the above inequality does not hold. Thus, for a sufficiently large number of non-productive steps, there will be at least one productive \al{step}.

\al{Let us} find out how many non-productive steps need to be taken to achieve inequality:
\[\varepsilon T_J = \dfrac{T_J M^2}{\mu} \dfrac{1 + \ln T}{T}  > \dfrac{M^2}{\mu}(1 + \ln T_J), \]
\[  \dfrac{1 + \ln T}{T} >   \dfrac{1 + \ln T_J}{T_J}. \]

Then $T_J\leq C  T$, \al{where $C$ is a constant,} which proves that the number of non-productive steps is \al{bounded}  until at least one productive \al{step} is \al{made}.
\end{remark}

\section{Online Mirror Descent with Regularization}\label{section_regulirization}

In this section, we propose some modifications of the algorithms \al{proposed} in \cite{Hazan-Rakhlin} for relatively strongly convex and relatively Lipschitz online optimization problems and \al{provide theoretical estimates} of the quality of the solution.

We consider the following  strongly convex minimization problem
\begin{equation}\label{pr_2}
\min_{x\in Q} \sum_{t=1}^T f_t(x),
\end{equation}
where $f_t : Q \longrightarrow \mathbb{R}$. 
Define $\mu_{1:t} : = \sum\limits_{s=1}^{t} \mu_s $, where $\mu_s$ is the parameter of relative strong convexity of the function $f_s$. Let $\mu_{1:0}=0.$
\begin{algorithm}[!ht]
\caption{General-Norm Online Gradient Descent: Relatively Strongly Convex and Relatively Lipschitz-Continuous Case.}\label{alg_1}
\begin{algorithmic}[1]
   \STATE Input: convex function $h$.
	\STATE Initialize $x_1$ arbitrarily.
	\FOR{$t=1, \ldots, T$}
    \STATE Predict $x_t$, observe $f_t$.
    \STATE Compute $\eta_{t+1}$ and let $y_{t+1}$ be such that $\nabla h(y_{t+1})=\nabla h(x_{t})- \eta_{t+1}\nabla f_t(x_t)$.
	\STATE Let $x_{t+1}=\arg\min\limits_{x\in Q}V(x,y_{t+1})$ be the projection of $y_{t+1}$ onto $Q$.
	\ENDFOR
\end{algorithmic}
\end{algorithm}

\begin{theorem}\label{t_1}
Suppose that, for each $t$, $f_t$ is an $M_t$-relatively Lipschitz-continuous and $\mu_t$-strongly convex function with respect to prox-function $h$. Applying the Algorithm \ref{alg_1} with $\eta_{t+1}=\frac{1}{\mu_{1:t}}$, we have
$$
Regret_T\le \sum\limits_{t=1}^{T} \frac{M_t^2}{\mu_{1:t}}.
$$
\end{theorem}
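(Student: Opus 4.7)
The plan is to mimic the per-step analysis already used in the proof of Theorem 1 (cf. the derivation of inequality \eqref{100024}), but with a time-varying strong convexity parameter $\mu_t$ and a step size chosen precisely to absorb the accumulated strong convexity $\mu_{1:t}$. The main ingredients are the standard mirror descent one-step inequality combined with the relative Lipschitz and relative strong convexity assumptions, after which a telescoping argument closes the proof.

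First, I would use the update rule $\nabla h(y_{t+1})=\nabla h(x_t)-\eta_{t+1}\nabla f_t(x_t)$ together with the Bregman projection $x_{t+1}=\arg\min_{x\in Q}V(x,y_{t+1})$ to derive, exactly as in the Theorem 1 proof, the mirror descent inequality
\[
\eta_{t+1}\langle\nabla f_t(x_t),x_t-x^*\rangle \le \eta_{t+1}^2 M_t^2 + V(x^*,x_t)-V(x^*,x_{t+1}),
\]
where the term $\eta_{t+1}^2 M_t^2$ is produced using the $M_t$-relative Lipschitz continuity of $f_t$ (inequality \eqref{X}). Then I would combine this with $\mu_t$-relative strong convexity, which gives
\[
f_t(x_t)-f_t(x^*)\le\langle\nabla f_t(x_t),x_t-x^*\rangle-\mu_t V(x^*,x_t),
\]
and divide the preceding mirror descent bound by $\eta_{t+1}$ to obtain, with $x^*=\arg\min_{x\in Q}\sum_{t=1}^{T}f_t(x)$,
\[
f_t(x_t)-f_t(x^*)\le \eta_{t+1}M_t^2+\Bigl(\tfrac{1}{\eta_{t+1}}-\mu_t\Bigr)V(x^*,x_t)-\tfrac{1}{\eta_{t+1}}V(x^*,x_{t+1}).
\]

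Next I would plug in the prescribed step size $\eta_{t+1}=1/\mu_{1:t}$, which is the key algebraic choice: it makes $\tfrac{1}{\eta_{t+1}}-\mu_t=\mu_{1:t}-\mu_t=\mu_{1:t-1}$, so the per-step bound becomes
\[
f_t(x_t)-f_t(x^*)\le \frac{M_t^2}{\mu_{1:t}}+\mu_{1:t-1}V(x^*,x_t)-\mu_{1:t}V(x^*,x_{t+1}).
\]
Summing over $t=1,\dots,T$ the Bregman terms telescope (using the convention $\mu_{1:0}=0$) to $-\mu_{1:T}V(x^*,x_{T+1})\le 0$, leaving exactly the announced bound
\[
Regret_T=\sum_{t=1}^{T}\bigl(f_t(x_t)-f_t(x^*)\bigr)\le\sum_{t=1}^{T}\frac{M_t^2}{\mu_{1:t}}.
\]

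The main obstacle, as far as I see it, is the very first inequality: verifying that the relative Lipschitz condition \eqref{X} together with the three-point identity for the Bregman divergence really produces the clean bound $\eta_{t+1}^2 M_t^2 + V(x^*,x_t)-V(x^*,x_{t+1})$ for $\eta_{t+1}\langle\nabla f_t(x_t),x_t-x^*\rangle$, since in the relative setting one no longer has the usual $\tfrac12$ factors coming from $1$-strong convexity of $h$. Once this step is carried out (it is essentially the same calculation as the one already used in the proof of Theorem 1), the rest of the argument is a matter of algebra and telescoping, and the freedom to pick $\eta_{t+1}=1/\mu_{1:t}$ makes the strong convexity term vanish into the telescoping piece rather than appearing on the right-hand side.
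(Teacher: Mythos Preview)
Your proposal is correct and follows essentially the same approach as the paper's proof. The paper organises the ``main obstacle'' step slightly more explicitly: it uses the three-point identity to write $\langle\nabla f_t(x_t),x_t-x^*\rangle=\frac{1}{\eta_{t+1}}\bigl[V(x^*,x_t)-V(x^*,y_{t+1})+V(x_t,y_{t+1})\bigr]$, applies the Pythagorean inequality $V(x^*,y_{t+1})\ge V(x^*,x_{t+1})$, and then bounds $V(x_t,y_{t+1})\le M_t^2\eta_{t+1}^2$ via $V(x_t,y_{t+1})+V(y_{t+1},x_t)=\eta_{t+1}\langle\nabla f_t(x_t),x_t-y_{t+1}\rangle\le \eta_{t+1}M_t\sqrt{2V(y_{t+1},x_t)}\le M_t^2\eta_{t+1}^2+V(y_{t+1},x_t)$; after that the telescoping is identical to yours.
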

\begin{proof}
The proof is given in Appendix A.
\end{proof}

Let’s now consider an analogue of Algorithm \ref{alg_1} for relatively strongly convex and relatively Lipschitz-continuous problems with iterative regularization. Define $\lambda_{1:t}:=\sum\limits_{s=1}^{t}\lambda_s$. The proposed algorithm is listed as Algorithm \ref{alg_2}, below. 

\begin{algorithm}[!ht]
\caption{Adaptive General-Norm Online Gradient Descent with Regularization.}\label{alg_2}
\begin{algorithmic}[1]
   \STATE Input: convex function $h$.
    \STATE Initialize $x_1$ arbitrarily.
    \FOR{$t = 1, \ldots,  T$}
    \STATE Predict $x_t$, observe $f_t$.
    \STATE Compute $\lambda_t=\frac{1}{2}\left(\sqrt{(\mu_{1:t}+\lambda_{1:t-1})^2+8M_t^2/(A^2+2M_d^2)}-(\mu_{1:t}+\lambda_{1:t-1})\right).$
    \STATE Compute $\eta_{t+1}$ and let $y_{t+1}$ be such that $$
        \nabla h(y_{t+1})=\nabla h(x_{t})- \eta_{t+1}\left(\nabla f_t(x_t)+\lambda_t \nabla d(x_t)\right).
    $$
	\STATE Let $x_{t+1}=\arg\min\limits_{x\in Q}V(x,y_{t+1})$ be the projection of $y_{t+1}$ onto $Q$.
    \ENDFOR
\end{algorithmic}
\end{algorithm}

For Algorithm \ref{alg_2}, we have the following result. 

\begin{theorem}\label{t_2}
Suppose that, for each $t$, $f_t$ is $M_t$-relatively Lipschitz-continuous and $\mu_t$-relatively strongly convex function with respect to the prox-function $h$. Let $d : Q \longrightarrow \mathbb{R}$ be $M_d$-relatively Lipschitz-continuous and $1$-strongly convex function with respect to $h$. Suppose that $d(x)\ge 0, \; \forall x\in Q$ and $A^2=\sup\limits_{x\in Q} d(x)$. Applying Algorithm \ref{alg_2} with $\eta_{t+1}=\frac{1}{\mu_{1:t}+\lambda_{1:t}}$, the following inequalities hold
$$
    Regret_T\le\lambda_{1:T}A^2+\sum\limits_{t=1}^{T}\frac{(M_t+\lambda_tM_d)^2}{\mu_{1:t}+\lambda_{1:t}},
$$
and
$$
    Regret_T\le2\inf\limits_{\lambda_1^*,\dots,\lambda_T^*}\left((A^2+2M_d^2)\lambda_{1:T}^*+\sum\limits_{t=1}^{T}\frac{(M_t+\lambda_t^*M_d)^2}{\mu_{1:t}+\lambda_{1:t}^*}\right).
$$
\end{theorem}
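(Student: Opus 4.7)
The plan is to reduce the analysis to Theorem~\ref{t_1} applied to the regularized per-step objective $F_t(x) := f_t(x) + \lambda_t d(x)$. Summing the relative Lipschitz continuity inequality \eqref{X} for $f_t$ and for $\lambda_t d$ shows that $F_t$ is $(M_t + \lambda_t M_d)$-relatively Lipschitz continuous; similarly, adding the strong convexity inequalities for $f_t$ and $\lambda_t d$ (the latter with constant $\lambda_t$, using the $1$-strong convexity of $d$) yields that $F_t$ is $(\mu_t + \lambda_t)$-relatively strongly convex with respect to $h$. The update in Algorithm~\ref{alg_2} is exactly one iteration of Algorithm~\ref{alg_1} applied to $F_t$, and the prescribed step size $\eta_{t+1} = 1/(\mu_{1:t}+\lambda_{1:t})$ coincides with $1/\sum_{s=1}^t(\mu_s+\lambda_s)$, which is what Theorem~\ref{t_1} requires for the $F_t$ sequence.

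Theorem~\ref{t_1}, whose underlying Mirror Descent analysis bounds deviation from any fixed comparator $u\in Q$, then yields $\sum_{t}(F_t(x_t)-F_t(x^*)) \leq \sum_t (M_t+\lambda_t M_d)^2/(\mu_{1:t}+\lambda_{1:t})$, where $x^* = \arg\min_{x\in Q}\sum_t f_t(x)$. Expanding $F_t = f_t+\lambda_t d$ rearranges this into $Regret_T \leq \sum_t \lambda_t(d(x^*) - d(x_t)) + \sum_t (M_t+\lambda_t M_d)^2/(\mu_{1:t}+\lambda_{1:t})$; using $d(x_t) \geq 0$ and $d(x^*) \leq A^2$ bounds the first sum by $A^2\lambda_{1:T}$, which establishes the first inequality.

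For the second inequality, I would first square the defining formula for $\lambda_t$ (rearranging $2\lambda_t + (\mu_{1:t}+\lambda_{1:t-1}) = \sqrt{(\mu_{1:t}+\lambda_{1:t-1})^2 + 8M_t^2/(A^2+2M_d^2)}$) to extract the key identity $\lambda_t(\mu_{1:t}+\lambda_{1:t})(A^2+2M_d^2) = 2M_t^2$. Combined with $(M_t+\lambda_t M_d)^2 \leq 2M_t^2 + 2\lambda_t^2 M_d^2$ and the trivial bound $\lambda_t^2/(\mu_{1:t}+\lambda_{1:t}) \leq \lambda_t$, the first inequality can be massaged into the self-bound $Regret_T \leq 2(A^2+2M_d^2)\lambda_{1:T}$, and, using the identity once more, also into $Regret_T \leq \sum_t 4M_t^2/(\mu_{1:t}+\lambda_{1:t})$.

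The remaining and most delicate step is to show $(A^2+2M_d^2)\lambda_{1:T} \leq \Phi(\lambda^*) := (A^2+2M_d^2)\lambda^*_{1:T} + \sum_t (M_t+\lambda^*_t M_d)^2/(\mu_{1:t}+\lambda^*_{1:t})$ for every alternative sequence $\lambda^*_1,\dots,\lambda^*_T$. Following the adaptive regularization argument of~\cite{Hazan-Rakhlin}, one uses the key identity to express the algorithm's running bound as the greedy optimum of a surrogate for $\Phi$, then performs a term-by-term comparison splitting indices according to whether $\lambda^*_{1:t} \geq \lambda_{1:t}$, exploiting monotonicity of $s\mapsto 1/(\mu_{1:t}+s)$ to absorb the discrepancy into the factor $2$. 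This competitive comparison, rather than the reduction to Theorem~\ref{t_1}, is where I expect the main technical obstacle to lie.
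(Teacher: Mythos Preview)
Your approach matches the paper's. For the first inequality the paper does exactly what you describe, inlining the proof of Theorem~\ref{t_1} for the regularized loss $F_t=f_t+\lambda_t d$ rather than citing it; for the second inequality the paper uses the same massaging via $(M_t+\lambda_tM_d)^2\le 2M_t^2+2\lambda_t^2M_d^2$ and $\lambda_t^2/(\mu_{1:t}+\lambda_{1:t})\le\lambda_t$, and then invokes the competitive-ratio argument from~\cite{Hazan-Rakhlin} as a black-box auxiliary lemma (Lemma~\ref{l_1}) rather than reproving it as you propose.
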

\begin{proof}
The proof is given in Appendix B.
\end{proof}

\section{The Case of Online Optimization Problems with Functional Constraints}\label{section_online_with_constr}

In this section, we consider a scheme with switching \al{over}  productive and non-productive steps both with and without iterative regularization for a relatively strongly convex and relatively Lipschitz-continuous \al{constrained} online optimization problem.

Remind that we consider the following problem of strongly convex conditional minimization
$$
\min_{x\in Q}\sum_{t=1}^T f_t(x),\quad  g(x) \leq 0.
$$
and
$$x^*=\arg\min\limits_{x\in Q} \sum_{t=1}^T f_t(x),\quad  g(x^*) \leq 0,$$
where $f_t : Q \longrightarrow \mathbb{R}$  and $g : Q \longrightarrow \mathbb{R}.$ 
Remind that the set of productive steps is $I$, the set of non-productive steps is $J$ and $T=|I|, T_J=|J|.$ Similarly to Section \ref{section_regulirization}, we define $\mu_{1:t}:=\sum\limits_{s=1}^{t}\mu_s$, where $\mu_s$ is the parameter of relative strong convexity of the function $f_s$ and let $\mu_{1:0}=0.$ If $t$ is the number of non-productive step, then $\mu_t=\mu_g,$ where $\mu_g$ is the parameter of relative strong convexity of the function $g.$
\begin{algorithm}
\caption{Mirror Descent for Constrained Optimization Problems with Relatively Lipschitz-Continuous and Relatively Strongly Convex Functions.}
\label{alg:myaab}
\begin{algorithmic}[1]
\REQUIRE $\varepsilon>0, T, x_1 \in Q$.
\STATE $i:= 1, t:=1$;
\REPEAT
\IF{$g(x_t) \leq \varepsilon$}
\STATE $\eta_t = \frac{1}{\mu_{1:t}}$;
\STATE $x_{t+1}:= Pr_Q\{x_t - \eta_t\nabla f_t(x_t)\}$; \quad ''productive step''
\STATE $i:= i+1$;
\STATE $t:= t+1$;
\ELSE
\STATE $\eta_t = \frac{1}{\mu_{1:t}}$;
\STATE $x_{t+1}:= Pr_Q\{x_t - \eta_t\nabla g(x_t)\}$;  \quad  ''non-productive step''
\STATE $t:= t+1$;
\ENDIF
\UNTIL{$i=T+1$}.
\STATE Guaranteed accuracy:
    $$
        \delta:=\frac{1}{T}\left(\sum_{t=1}^{  T+T_J}\frac{M^2}{\mu_{1:t}} - \varepsilon T_J\right).
    $$
\end{algorithmic}
\end{algorithm}

\begin{theorem}\label{Thm_4}
Suppose that, for each $t$, $f_t$ is an $M_t$-relatively Lipschitz-continuous and $\mu_t$-strongly convex function with respect to the convex function $h$. Let $g(x)$ be $M_g$-relatively Lipschitz-continuous and $\mu_g$-strongly convex function with respect to $h$. If Algorithm \ref{alg:myaab} works exactly $T$ productive steps and $Regret_T \geq 0$, then the following inequality holds:
$$
Regret_T\le \sum_{t=1}^{ T+T_J}\frac{M^2}{\mu_{1:t}} - \varepsilon T_J\al{,}
$$
where $M = \max\{M_t,M_g\}$ and $g(x_t) \leq \varepsilon$ for any $t = \overline{1, T}$.
\end{theorem}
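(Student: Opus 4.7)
The plan is to follow the same productive/non-productive switching template that was used for Theorem~1, but with the step size $\eta_t = 1/\mu_{1:t}$ replacing $\eta_t = 1/(\mu t)$, so that the resulting telescoping respects the per-function strong convexity parameters $\mu_t$ as in Theorem~\ref{t_1}. In other words, the proof is a hybrid of the computations in Theorem~1 and Theorem~\ref{t_1}.

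First I would handle a productive step $t \in I$. Starting from $\mu_t$-relative strong convexity of $f_t$ and the standard mirror-descent three-point identity (multiplied by $\eta_t$), I would write
\begin{equation*}
\eta_t\bigl(f_t(x_t) - f_t(x^*)\bigr) \le \eta_t^2 M_t^2 + V(x^*,x_t) - V(x^*,x_{t+1}) - \eta_t \mu_t V(x^*,x_t),
\end{equation*}
exactly as in \eqref{100024}. Dividing by $\eta_t = 1/\mu_{1:t}$ and simplifying, the coefficient of $V(x^*,x_t)$ becomes $\mu_{1:t} - \mu_t = \mu_{1:t-1}$, yielding
\begin{equation*}
f_t(x_t) - f_t(x^*) \le \frac{M_t^2}{\mu_{1:t}} + \mu_{1:t-1}V(x^*,x_t) - \mu_{1:t}V(x^*,x_{t+1}).
\end{equation*}
This is the key observation: the choice $\eta_t = 1/\mu_{1:t}$ is precisely what makes the Bregman terms telescope, since the positive coefficient at step $t$ is $\mu_{1:t-1}$ and the negative coefficient at step $t-1$ is $-\mu_{1:t-1}$.

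Next I would repeat the same computation for a non-productive step $t \in J$, using $\mu_g$-relative strong convexity and $M_g$-relative Lipschitz continuity of $g$, together with the fact that $g(x_t) > \varepsilon \ge g(x^*)$, to get an analogous bound with $M_g^2$ in place of $M_t^2$ and $\varepsilon < g(x_t) - g(x^*)$ on the left. Taking $M = \max\{M_t, M_g\}$ and summing over all $t \in I \cup J = \{1,\dots,T+T_J\}$, the Bregman terms telescope to leave only $-\mu_{1:(T+T_J)}V(x^*,x_{T+T_J+1}) \le 0$, producing
\begin{equation*}
\sum_{t\in I}\bigl(f_t(x_t)-f_t(x^*)\bigr) + \sum_{t\in J}\bigl(g(x_t)-g(x^*)\bigr) \le \sum_{t=1}^{T+T_J}\frac{M^2}{\mu_{1:t}}.
\end{equation*}

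Finally I would bound the $J$-sum below by $\varepsilon T_J$, rearrange, and use the assumed non-negativity of the regret (as in step~4 of the proof of Theorem~1) to arrive at
\begin{equation*}
Regret_T = \sum_{t\in I}\bigl(f_t(x_t)-f_t(x^*)\bigr) \le \sum_{t=1}^{T+T_J}\frac{M^2}{\mu_{1:t}} - \varepsilon T_J,
\end{equation*}
which is the claimed bound. The only genuine subtlety is verifying the algebraic identity $\mu_{1:t} - \mu_t = \mu_{1:t-1}$ that makes the telescoping work for a general sequence $\{\mu_t\}$; once that is in place the rest is bookkeeping inherited from the earlier two proofs, and no new analytic ingredient is required.
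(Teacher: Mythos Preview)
Your proposal is correct and follows essentially the same approach as the paper's own proof: derive the per-step inequality from relative strong convexity and the mirror-descent three-point identity, divide by $\eta_t = 1/\mu_{1:t}$ so that the Bregman coefficients become $\mu_{1:t-1}$ and $-\mu_{1:t}$, telescope over $t=1,\dots,T+T_J$, and then subtract the lower bound $\varepsilon T_J$ coming from the non-productive steps. The paper's argument is exactly this hybrid of Theorem~1 and Theorem~\ref{t_1} that you describe, step for step.
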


\begin{proof}
\begin{enumerate}
\item Taking into account that $f_t$ is $M_t$-relative Lipschitz continuous, then for every productive step, we have
\begin{equation*}
    \begin{aligned}
    \eta_t\Big(f_t(x_t) -f_t(x^*)\Big) & \leq \eta_t\Big(\langle\nabla f_t, x_t -x^*\rangle - \mu_t V(x^*,x_t)\Big) \\&
    \leq \eta_t^2M_t^2+V(x^*,x_t)-V(x^*,x_{t+1})-\eta_t\mu_t V(x^*,x_t).
    \end{aligned}
\end{equation*}
Hence, after dividing both sides of the above inequality by $\eta_t$\al{,} we get
\begin{equation}\label{eq_1000}
    \begin{aligned}
    f_t(x_t) -f_t(x^*) & \leq \eta_tM_t^2+\frac{1}{\eta_t}\Big(V(x^*,x_t)-V(x^*,x_{t+1})\Big)-\mu_t V(x^*,x_t) \\&
    = \frac{M_t^2}{\mu_{1:t}} + \mu_{1:t}V(x^*,x_t)-\mu_t V(x^*,x_t)-\mu_{1:t}V(x^*,x_{t+1}) \\&
    =\frac{M_t^2}{\mu_{1:t}} + \mu_{1:t-1}V(x^*,x_t)-\mu_{1:t}V(x^*,x_{t+1}).
    \end{aligned}
\end{equation}

\item Similarly, taking into account that  $g$ is $M_g$-relative Lipschitz continuous, then for every non-productive step\al{,} we have $g(x_t) > \varepsilon$, and
\begin{equation*}
    \begin{aligned}
    \eta_t \varepsilon & < \eta_t\Big(g(x_t) - g(x^*)\Big)\leq \eta_t \Big(\langle\nabla g, x_t -x^*\rangle - \mu_t V(x^*,x_t) \Big)  \\&
    \leq \eta_t^2M_g^2+V(x^*,x_t)-V(x^*,x_{t+1})-\eta_t\mu_tV(x^*,x_t).
    \end{aligned}
\end{equation*}
Dividing both sides of the last inequality by $\eta_t$, we get:
\begin{equation}\label{eq_5000}
    \begin{aligned}
    \varepsilon & < g(x_t) - g(x^*)
    \\&
    \leq \eta_tM_g^2+\frac{1}{\eta_t}\Big(V(x^*,x_t)-V(x^*,x_{t+1})\Big)-\mu_tV(x^*,x_t)
    \\&
    = \frac{M_g^2}{\mu_{1:t}} + \mu_{1:t}V(x^*,x_t)-\mu_tV(x^*,x_t)-\mu_{1:t}V(x^*,x_{t+1})
    \\&
    =\frac{M_g^2}{\mu_{1:t}} + \mu_{1:t-1}V(x^*,x_t)-\mu_{1:t}V(x^*,x_{t+1}).
    \end{aligned}
\end{equation}



\item Summing up inequalities \eqref{eq_1000}, \eqref{eq_5000} \al{over} productive and non-productive steps\al{,}  \al{for} $M = \max\{M_t,M_g\}$,  \al{we get}
\begin{equation*}
    \begin{aligned}
        & \sum_{t \in I} \Big(f_t(x_t) - f_t(x^*) \Big) + \sum_{t \in J} \Big(g(x_t) - g(x^*)\Big)
        \\&
        \leq  \sum_{t=1}^{ T+T_J} \left( \frac{M^2}{\mu_{1:t}} + \mu_{1:t-1}V(x^*,x_t)-\mu_{1:t}V(x^*,x_{t+1})\right)
        \\&
         \leq \sum_{t=1}^{ T+T_J}\frac{M^2}{\mu_{1:t}} - \mu_{1:T+T_J}V(x^*,x_{T+T_J}) \leq \sum_{t=1}^{ T+T_J}\frac{M^2}{\mu_{1:t}}.
    \end{aligned}
\end{equation*}

Using the fact, that for non-productive steps
\[ g(x_t) - g(x^*) \ge g(x_t) > \varepsilon, \]
we get an estimate for the sum of the objective functionals:
\begin{equation*}
    \begin{aligned}
        \sum_{t \in I} \Big(f_t(x_t) - f_t(x^*) \Big) &
        \leq \sum_{t=1}^{ T+T_J}\frac{M^2}{\mu_{1:t}} - \sum_{t \in J} \Big(g(x_t) - g(x^*)\Big)
        \\&
        \leq \sum_{t=1}^{  T+T_J}\frac{M^2}{\mu_{1:t}} - \sum_{t \in J} \varepsilon= \sum_{t=1}^{ T+T_J}\frac{M^2}{\mu_{1:t}} - \varepsilon T_J.
    \end{aligned}
\end{equation*}

\item Thus, we get
\begin{equation*}
    \begin{aligned}
        0 \leq Regret_T=\sum_{t=1}^T \Big(f_t(x_t) - f_t(x^*)\Big) & = \sum_{t=1}^T f_t(x_t) - \min_{x\in Q}\sum_{t=1}^T f_t(x)\\&
        \leq \sum_{t=1}^{  T+T_J}\frac{M^2}{\mu_{1:t}} - \varepsilon T_J.
    \end{aligned}
\end{equation*}

\end{enumerate}
\end{proof}

\begin{corollary}\label{cor_1}
Assume that all conditions of Theorem \ref{Thm_4} hold and suppose \\$\mu_t\ge \mu>0$ for all $1\leq t\leq T+T_J$. If $$\varepsilon = \dfrac{M^2}{\mu}\dfrac{1 + \ln T}{T},$$ then the bound on the regret of Algorithm \ref{alg:myaab} is $O(\ln T)$.
\end{corollary}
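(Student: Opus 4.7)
The plan is to leverage Theorem \ref{Thm_4} directly and reduce the analysis to essentially the argument already carried out in Section \ref{section_1} for the uniformly strongly convex setting. First I would use the hypothesis $\mu_t \geq \mu > 0$ to bound $\mu_{1:t} \geq \mu t$, which immediately yields
$$
\sum_{t=1}^{T+T_J} \frac{M^2}{\mu_{1:t}} \;\leq\; \frac{M^2}{\mu}\sum_{t=1}^{T+T_J}\frac{1}{t} \;\leq\; \frac{M^2}{\mu}\bigl(1+\ln(T+T_J)\bigr).
$$
Plugging into the conclusion of Theorem \ref{Thm_4} gives
$$
0 \;\leq\; Regret_T \;\leq\; \frac{M^2}{\mu}\bigl(1+\ln(T+T_J)\bigr) - \varepsilon T_J,
$$
which is structurally identical to the inequality used in Theorem 1.

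Next I would reproduce the non-productive-step accounting from Theorem 1. Substituting the prescribed $\varepsilon = \frac{M^2}{\mu}\frac{1+\ln T}{T}$ into the last display and rearranging leads to
$$
\frac{T_J}{T} \;\leq\; \frac{1+\ln(T+T_J)}{1+\ln T} \;\leq\; 1 + \ln\!\left(1+\frac{T_J}{T}\right),
$$
and since the right-hand side grows only logarithmically in $T_J/T$ while the left-hand side is linear, this forces $T_J/T$ to be bounded by an absolute constant $C$ (numerically $C \approx 2.146$, as observed after Theorem 1). Hence $T+T_J \leq (C+1)T$.

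Finally, substituting $T_J \leq CT$ back into the regret bound yields
$$
Regret_T \;\leq\; \frac{M^2}{\mu}\bigl(1 + \ln((C+1)T)\bigr) \;=\; O(\ln T),
$$
which is precisely the desired conclusion. The only non-mechanical step is the bounding of $T_J$ by a constant multiple of $T$; however, since the inequality governing $T_J/T$ is identical in form to the one appearing in the proof of Theorem 1, no new idea is required, and the corollary follows by a transparent reduction. I would expect the write-up to be a short paragraph rather than a separate multi-step proof, explicitly citing Theorem \ref{Thm_4} for the raw bound and the argument of Theorem 1 for the control of $T_J$.
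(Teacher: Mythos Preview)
Your proposal is correct and follows essentially the same route as the paper's own proof: bound $\mu_{1:t}\ge \mu t$, reduce the regret estimate of Theorem~\ref{Thm_4} to $\frac{M^2}{\mu}(1+\ln(T+T_J))-\varepsilon T_J$, and then replay verbatim the $T_J/T \le 1+\ln(1+T_J/T)$ argument from Theorem~1 to conclude $T_J\le CT$ and hence $Regret_T=O(\ln T)$. There is no meaningful difference between your outline and the paper's argument.
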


\begin{proof}
$$
0\leq Regret_T\leq \sum_{t=1}^{ T+T_J}\frac{M^2}{\mu_{1:t}} - \varepsilon T_J\leq\sum_{t=1}^{ T+T_J}\frac{M^2}{\mu t} - \varepsilon T_J\leq \dfrac{M^2}{\mu}\Big(\ln (T+T_J)+1\Big)-\varepsilon T_J,
$$
hence
$\varepsilon T_J \leq\dfrac{M^2}{\mu}\Big(1+\ln(T+T_J)\Big)$. Let $\varepsilon = \dfrac{M^2}{\mu}\dfrac{1 + \ln T}{T}.$
Then we have
$$\dfrac{1 + \ln T}{T} T_J \leq 1+\ln(T+T_J),$$
and
$$\dfrac{T_J}{T}  \leq \dfrac{1+\ln(T+T_J)}{1 + \ln T}=\dfrac{1+\ln T + \ln(1+\frac{T_J}{T})}{1 + \ln T} \leq 1 + \ln(1+\frac{T_J}{T}).$$
Since the linear function grows faster than the logarithmic one, it is obviously, that with a sufficiently large $T_J$, the above inequality does not hold, and then $\dfrac{T_J}{T}$ is bounded. Thus we proved that \al{there exists such a constant $C>0$, that} $T_J \leq CT$.
So, we have $$ Regret_T\leq \dfrac{M^2}{\mu}\Bigg(1+\ln\Big((C+1)T\Big)\Bigg)=O(\ln T)=O(T\varepsilon).$$
\end{proof}
Let’s consider an analogue of Algorithm \ref{alg:myaab} for relatively strongly convex and relatively Lipschitz-continuous problems with iterative regularization. Similarly to Section \ref{section_regulirization}, we define $\lambda_{1:t}:=\sum\limits_{s=1}^{t}\lambda_s$.
\begin{algorithm}[!ht]
\caption{ Constrained Online Optimization: Mirror Descent for Relatively Strongly Convex and Relatively Lipschitz-Continuous Problems with Regularization.}
\label{alg:myaac}
\begin{algorithmic}[1]
\REQUIRE $\varepsilon>0, x_1 \in Q$.
\STATE $i:= 1, t:=1$;
\REPEAT
\IF{$g(x_t) \leq \varepsilon$}
\STATE $
\lambda_t=\frac{1}{2}\left(\sqrt{(\mu_{1:t}+\lambda_{1:t-1})^2+8M^2/(A^2+2M_d^2)}-(\mu_{1:t}+\lambda_{1:t-1})\right)
$;
\STATE $\eta_t = \frac{1}{\mu_{1:t}+\lambda_{1:t}}$;
\STATE $x_{t+1}:= Pr_Q\{x_t - \eta_t(\nabla f_t(x_t)+\lambda_t\nabla d(x_t))\}$; \quad  ''productive step''
\STATE $i:= i+1$;
\STATE $t:= t+1$;
\ELSE
\STATE $
\lambda_t=\frac{1}{2}\left(\sqrt{(\mu_{1:t}+\lambda_{1:t-1})^2+8M^2/(A^2+2M_d^2)}-(\mu_{1:t}+\lambda_{1:t-1})\right)
$;
\STATE $\eta_t = \frac{1}{\mu_{1:t}+\lambda_{1:t}}$;
\STATE $x_{t+1}:= Pr_Q\{x_t - \eta_t(\nabla g(x_t)+\lambda_t\nabla d(x_t))\}$; \quad  ''non-productive step''
\STATE $t:= t+1$;
\ENDIF
\UNTIL{$i=T+1$}.
\STATE Guaranteed accuracy:
    $$
        \delta:=\frac{1}{T}\left(\lambda_{1:T+T_J}A^2+\sum\limits_{t=1}^{T+T_J}\frac{(M+\lambda_tM_d)^2}{\mu_{1:t}+\lambda_{1:t}}-\varepsilon T_J\right).
    $$
\end{algorithmic}
\end{algorithm}

\begin{theorem}\label{Thm5}
Suppose that, for each $t$, $f_t$ is an $M_t$-relatively Lipschitz-continuous and $\mu_t$-relatively strongly convex function with respect to the prox-function $h$. Let $g(x)$ be $M_g$-relatively Lipschitz-continuous and $\mu_g$-relatively strongly convex function with respect to $h$. Let $d : Q \longrightarrow \mathbb{R}$  be $M_d$-relatively Lipschitz-continuous and $1$-relatively strongly convex function with respect to $h$. Suppose also that $d(x)\ge 0, \; \forall x\in Q$. If Algorithm \ref{alg:myaac} works exactly $T$ productive steps and $Regret_T \geq 0$, then the following inequalities hold:
$$
Regret_T\le\lambda_{1:T+T_J}A^2+\sum\limits_{t=1}^{T+T_J}\frac{(M+\lambda_tM_d)^2}{\mu_{1:t}+\lambda_{1:t}}-\varepsilon T_J,
$$
and
$$
Regret_T\le2\inf\limits_{\lambda_1^*,\dots,\lambda_{T+T_J}^*}\left((A^2+2M_d^2)\lambda_{1:T+T_J}^*+\sum\limits_{t=1}^{T+T_J}\frac{(M+\lambda_t^*M_d)^2}{\mu_{1:t}+\lambda_{1:t}^*}\right)-\varepsilon T_J.
$$
where $A^2=\sup\limits_{x\in Q} d(x)$, $M = \max\{M_t,M_g\}$ and $g(x_t) \leq \varepsilon$ for any $t = \overline{1, T}$.
\end{theorem}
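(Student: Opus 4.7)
The plan is to adapt the per-step Mirror Descent analysis used in the proof of Theorem \ref{Thm_4} by applying it to the regularized objective, and then combine it with the inductive choice of $\lambda_t$ from Theorem \ref{t_2}. Concretely, define the augmented per-step functions $\widetilde{f}_t(x) = f_t(x) + \lambda_t d(x)$ on productive steps and $\widetilde{g}_t(x) = g(x) + \lambda_t d(x)$ on non-productive steps. Because $d$ is $M_d$-relatively Lipschitz continuous and $1$-relatively strongly convex with respect to $h$, the function $\widetilde{f}_t$ is $(M_t+\lambda_t M_d)$-relatively Lipschitz and $(\mu_t+\lambda_t)$-relatively strongly convex; the same holds for $\widetilde{g}_t$ with $M_g$ in place of $M_t$ and $\mu_g$ in place of $\mu_t$.

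First, I would repeat the derivations of \eqref{eq_1000} and \eqref{eq_5000} in this regularized setting. Using the step size $\eta_t = 1/(\mu_{1:t}+\lambda_{1:t})$ and the standard three-point identity for Bregman divergences applied to the composite gradient step $\nabla h(y_{t+1}) = \nabla h(x_t) - \eta_t(\nabla f_t(x_t)+\lambda_t\nabla d(x_t))$ (resp.\ with $\nabla g$), together with the relative Lipschitz inequality and relative strong convexity, one obtains for productive $t$
$$
\widetilde{f}_t(x_t) - \widetilde{f}_t(x^*) \le \frac{(M_t+\lambda_t M_d)^2}{\mu_{1:t}+\lambda_{1:t}} + (\mu_{1:t-1}+\lambda_{1:t-1})V(x^*,x_t) - (\mu_{1:t}+\lambda_{1:t})V(x^*,x_{t+1}),
$$
and analogously for non-productive $t$ with $M_g$ and the strict inequality $\varepsilon < g(x_t) - g(x^*) \le \widetilde{g}_t(x_t) - \widetilde{g}_t(x^*) + \lambda_t(d(x^*)-d(x_t))$. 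Using $d(x_t)\ge 0$ and $d(x^*)\le A^2$, we peel off the regularizer at the cost of an additive $\lambda_t A^2$ per step, which yields bounds on $f_t(x_t)-f_t(x^*)$ and $g(x_t)-g(x^*)$ with $M=\max\{M_t,M_g\}$ as in Theorem \ref{Thm_4}.

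Next, I would sum these bounds over $t=1,\dots,T+T_J$. The $V$-terms telescope into a non-positive remainder $-(\mu_{1:T+T_J}+\lambda_{1:T+T_J})V(x^*,x_{T+T_J+1})$ that can be discarded, while the $\lambda_t A^2$ terms accumulate into $\lambda_{1:T+T_J} A^2$. Splitting the left-hand side into productive and non-productive contributions, and using $g(x_t)-g(x^*)>\varepsilon$ on non-productive steps exactly as in Theorem \ref{Thm_4}, gives
$$
Regret_T = \sum_{t\in I}\bigl(f_t(x_t)-f_t(x^*)\bigr) \le \lambda_{1:T+T_J}A^2 + \sum_{t=1}^{T+T_J}\frac{(M+\lambda_t M_d)^2}{\mu_{1:t}+\lambda_{1:t}} - \varepsilon T_J,
$$
which is the first asserted bound.

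For the second inequality, I would import the optimality argument from the proof of Theorem \ref{t_2} (Appendix B). The explicit formula for $\lambda_t$ in Algorithm \ref{alg:myaac} is precisely the positive root of a quadratic designed so that the per-step increment of $\lambda_t A^2 + (M+\lambda_t M_d)^2/(\mu_{1:t}+\lambda_{1:t})$ is minimized up to a factor of two when compared termwise with any alternative non-negative sequence $(\lambda_t^*)$. One then shows, by the same quadratic comparison used in Appendix B (essentially completing the square and applying $(a+b)^2\le 2a^2+2b^2$ to pass from $(M+\lambda_tM_d)^2$ to $M^2+M_d^2\lambda_t^2$), that
$$
\lambda_{1:T+T_J}A^2 + \sum_{t=1}^{T+T_J}\frac{(M+\lambda_t M_d)^2}{\mu_{1:t}+\lambda_{1:t}} \le 2\inf_{\lambda_1^*,\dots,\lambda_{T+T_J}^*}\left((A^2+2M_d^2)\lambda_{1:T+T_J}^* + \sum_{t=1}^{T+T_J}\frac{(M+\lambda_t^* M_d)^2}{\mu_{1:t}+\lambda_{1:t}^*}\right),
$$
and subtracting $\varepsilon T_J$ yields the claim. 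The main obstacle is the last step: verifying in detail that the recursive choice of $\lambda_t$ (which depends on the partial sums $\lambda_{1:t-1}$ already committed by previous productive \emph{and} non-productive iterations) still enjoys the factor-$2$ optimality established in Theorem \ref{t_2} for the unconstrained case, since in our setting the sequence of "observed" functions interleaves $f_t$ and $g$ with different Lipschitz and strong-convexity constants. The critical observation that resolves this is that the regularization update uses the uniform upper bound $M=\max\{M_t,M_g\}$ in the discriminant and that $\mu_t$ is replaced by $\mu_g$ on non-productive steps, so the recursion remains of the same form as in Appendix B and the comparison argument there applies verbatim.
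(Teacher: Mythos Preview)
Your proposal is correct and follows essentially the same approach as the paper's proof in Appendix C: derive the per-step Bregman bound for the regularized functions $f_t+\lambda_t d$ (resp.\ $g+\lambda_t d$), telescope the divergence terms with the step size $\eta_t=1/(\mu_{1:t}+\lambda_{1:t})$, drop the regularizer via $d(x_t)\ge 0$ and $d(x^*)\le A^2$, and then use the non-productive inequality $g(x_t)-g(x^*)>\varepsilon$ to produce the $-\varepsilon T_J$ term. For the second bound the paper likewise reduces, via the inequality $(M+\lambda_t M_d)^2\le 2M^2+2\lambda_t^2M_d^2$, to the form covered by Lemma~\ref{l_1} with $C_t=2M^2/(A^2+2M_d^2)$ and horizon $T+T_J$, exactly as you outline; your observation that the uniform constant $M=\max\{M_t,M_g\}$ makes the recursion identical to that of Appendix~B is precisely why no new argument is needed in the constrained case.
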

\begin{proof}
The proof is given in Appendix C.
\end{proof}

We can formulate the following statement for concrete values of $\mu_t$.
Partially, we can achieve intermediate rates for regret between
$T$ and $log T$.

\begin{corollary}\label{cor_4}
Assume that all conditions of Theorem \ref{Thm5} hold and $\mu_t=t^{-\alpha}$ for all $1\leq t\leq T+T_J.$
\begin{enumerate}
\item If $\alpha=0, \lambda_t=0 \; \forall 1\leq t\leq T+T_J$,  and $\varepsilon = M^2\dfrac{1 + \ln T}{T},$ then the bound on the regret of Algorithm \ref{alg:myaac} is $O(\ln T)$.
\item If $\alpha>1/2, \lambda_1=\sqrt{T+T_J}, \lambda_t=0$ for $1<t\leq T+T_J,$ and $$\varepsilon = \dfrac{A^2+2(M_d^2+M^2)}{\sqrt{T}},$$ then the bound on the regret of Algorithm \ref{alg:myaac} is $O(\sqrt{T})$.
\item If $0<\alpha\leq 1/2,\lambda_1=(T+T_J)^{\alpha}, \lambda_t=0\quad\forall 1\leq t\leq T+T_J$ and
$$
\varepsilon = \left(A^2 + 2 M_d^2 + \frac{4 M^2}{\alpha}\right)T^{\alpha - 1},
$$ 
then the bound on the regret of Algorithm \ref{alg:myaac} is $O(T^{\alpha})$.
\end{enumerate}
\end{corollary}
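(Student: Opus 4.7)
The plan is to specialize Theorem~\ref{Thm5} to each of the three parameter regimes (interpreting the $\lambda_t$'s appearing in the corollary as the free parameters $\lambda_t^*$ of the second, infimum-form bound), to estimate the resulting sums by integral comparisons, and then to propagate the assumption $Regret_T \ge 0$ together with the prescribed $\varepsilon$ to derive $T_J \le CT$, exactly as was done in Corollary~\ref{cor_1}. Write $N := T + T_J$ throughout.

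Item~1 reduces at once to Corollary~\ref{cor_1}: with $\alpha = 0$ one has $\mu_{1:t} = t$, and plugging $\lambda_t^* \equiv 0$ into the second bound of Theorem~\ref{Thm5} gives $Regret_T \le 2\sum_{t=1}^{N} M^2/t - \varepsilon T_J \le 2 M^2(1 + \ln N) - \varepsilon T_J$, after which the argument of Corollary~\ref{cor_1} applies verbatim with the stated $\varepsilon$ and yields an $O(\ln T)$ regret bound.

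For items~2 and~3 I would take $\lambda_1^* = N^{\beta}$ and $\lambda_t^* = 0$ for $t \ge 2$, where $\beta = 1/2$ in item~2 and $\beta = \alpha$ in item~3, so that $\lambda_{1:t}^* = N^{\beta}$ for every $t$. The first term in the bound becomes $(A^2 + 2 M_d^2)\, N^{\beta}$; the $t = 1$ summand is handled by $(M + N^{\beta} M_d)^2/N^{\beta} \le 2 M^2 N^{-\beta} + 2 M_d^2 N^{\beta}$; and what remains is the tail $\sum_{t=2}^{N} M^2/(\mu_{1:t} + N^{\beta})$. In item~2 ($\alpha > 1/2$) the sequence $\mu_{1:t}$ is bounded, logarithmic, or grows like $t^{1-\alpha}$ with $1-\alpha < 1/2$, so $\mu_{1:t} + \sqrt{N} \ge \sqrt{N}$ and the tail is $\le M^2 \sqrt{N}$, giving a total of $O(\sqrt{N})$. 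In item~3 ($0 < \alpha \le 1/2$) I split the sum at $t_0 \asymp N^{\alpha/(1-\alpha)}$, where $\mu_{1:t} \approx t^{1-\alpha}/(1-\alpha)$ crosses $N^{\alpha}$: the $t \le t_0$ piece contributes $O(t_0/N^{\alpha}) = O(N^{\alpha^2/(1-\alpha)}) \le O(N^{\alpha})$ (using $\alpha \le 1/2$), while the $t > t_0$ piece is bounded by $\int_{t_0}^{N} (1-\alpha)\, s^{\alpha-1}\, ds \le N^{\alpha}/\alpha$, producing the $1/\alpha$ factor that appears in the prescribed $\varepsilon$.

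In each case the bound has the form $Regret_T \le K \cdot N^{\gamma} - \varepsilon T_J$ with $\gamma \in \{0, 1/2, \alpha\}$ (where $\gamma = 0$ stands for the logarithmic case), and the chosen $\varepsilon$ is of order $T^{\gamma - 1}$ up to logarithms. From $Regret_T \ge 0$ one then obtains $T_J \, T^{\gamma - 1} \le K(T + T_J)^{\gamma}$, and a short case split on whether $T_J \le T$ or $T_J > T$ forces $T_J \le CT$, whence $N \le (C+1)T$ and the claimed $O(T^{\gamma})$ regret bound follows after substituting back. The main obstacle will be the sum splitting in item~3: one has to track the exponents coming from $\mu_{1:t} \sim t^{1-\alpha}/(1-\alpha)$ carefully in order to obtain the precise $1/\alpha$ factor in $\varepsilon$, whereas the remaining steps are routine.
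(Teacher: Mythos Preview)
Your plan is correct and matches the paper's proof in structure: specialize Theorem~\ref{Thm5} with the indicated $\lambda$'s, bound the resulting sums, and then use $Regret_T\ge 0$ together with the prescribed $\varepsilon$ to force $T_J\le CT$. The only noteworthy differences are that the paper plugs the stated $\lambda_t$'s directly into the \emph{first} bound of Theorem~\ref{Thm5} (rather than viewing them as $\lambda_t^*$ in the infimum bound, which costs you an immaterial factor~$2$), and in item~3 it avoids your sum split by simply dropping $\lambda_{1:t}$ from the denominator and using $\mu_{1:t}\ge (1-\alpha)^{-1}(t^{1-\alpha}-1)$, after which $\sum_t (1-\alpha)/(t^{1-\alpha}-1)\le (4/\alpha)N^\alpha+O(1)$ gives the same $1/\alpha$ factor with less bookkeeping.
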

\begin{proof}
The proof is given in Appendix D.
\end{proof}

\section{Numerical Experiments}\label{section_experemints}
In this section, to demonstrate the performance of the proposed Algorithm \ref{alg:myaab}, we conduct some numerical experiments for the \al{considered} problem  \eqref{Problem_Statement} and \al{make}  a comparison with an adaptive Algorithm 2, proposed in \cite{Titov_online}.
All experiments were implemented in Python 3.4, on a computer fitted with Intel(R) Core(TM) i7-8550U CPU @ 1.80GHz, 1992 Mhz, 4 Core(s), 8 Logical Processor(s). RAM of the computer is 8 GB.

Let us consider the following function
\begin{equation}\label{objective_experiments}
    f(x) = \frac{1}{T} \sum_{i = 1}^{T} \left( \left|\langle a_i, x \rangle - b_i\right| + \frac{\mu_i}{2} \|x\|_2^2 \right),
\end{equation}
where $a_i \in \mathbb{R}^n, b_i \in \mathbb{R}, \mu_i>0 $.  \al{F}unctional constraints \al{are defined as follows} 
\begin{equation}\label{constraints_g}
    g(x) = \max_{1 \leq i \leq m} \left\{ \langle  \alpha_i, x\rangle - \beta_i + \frac{ \widehat{\mu_i} }{2} \|x\|_2^2 \right\},
\end{equation}
where $\alpha_i \in \mathbb{R}^n, \beta_i \in \mathbb{R}, \widehat{\mu_i}  >0 $.

\al{F}unction $f$ is  \al{the} \al{arithmetic mean}  of the  functions $f_i(x) = \left|\langle a_i, x \rangle - b_i\right| + \frac{\mu_i}{2}\|x\|_2^2, \; i=\overline{1, T}$. Each of these functions is $M_i$-Lipschitz-continuous and $\mu_i$-strongly convex. Also, function $g$ is $M_g$-Lipschitz-continuous and $\mu_g$-strongly convex. \al{C}oefficients $a_i, \alpha_i \in \mathbb{R}^n$ and constants $b_i, \beta_i \in \mathbb{R}$ in \eqref{objective_experiments} and \eqref{constraints_g}  are randomly generated from the uniform distribution over $[0,1)$. Also, the strong convexity parameters $\mu_i$ and $\widehat{\mu}_i$ are randomly chosen in the interval $(0,1)$.

We choose a standard Euclidean proximal setup as a prox-function, starting point $x_0 = \left( \frac{1}{\sqrt{n}}, \ldots, \frac{1}{\sqrt{n}}\right) \in \mathbb{R}^n$ and $Q$ is the unit ball in $\mathbb{R}^n$.

We run Algorithm \ref{alg:myaab} and adaptive Algorithm 2 from \cite{Titov_online}  with $n = 1000$ and $m= 10$ and different values of $T$ with $\varepsilon = 1/\sqrt{T}$. The results of the work of these algorithms are represented in Fig. \ref{fig_results}, below. These results demonstrate the number of non-productive steps, the running time is given in seconds,  the guaranteed accuracy $\delta $ of the approximated solution (sequence $\{x_t\}_{t \in I}$ on productive steps), and the values $\frac{1}{T} \sum_{i= 1}^{T}f_i(x_i)$, where $x_i$ is productive, as a function of $T$. The dotted curve represents the results of the proposed Algorithm \ref{alg:myaab}, whereas the dashed curve represents the results of the adaptive Algorithm 2 in \cite{Titov_online}.

From the conducted experiments, we can see that the adaptive Algorithm 2 in \cite{Titov_online}, works faster than Algorithm \ref{alg:myaab}, with a smaller amount of non-productive steps. But when increasing the number of functional\al{s} $f_i$ in \eqref{objective_experiments}, the guaranteed accuracy $\delta$ and values of the objective function at productive steps, \al{produced} by Algorithm \ref{alg:myaab} is better.

Note that from Fig. \ref{fig_results}, we can see that \al{increasing of} $T$ (the number of functionals $f_i$) \al{leads to an increasing of}  $\delta $ \al{(the accuracy of the solution).} 
\al{In other words, increasing the number of functionals $f_i$ in the objective function \eqref{objective_experiments}, which in fact is increasing information about the objective function or actually enlarging data about the problem, leads to increasing the accuracy of the solution.}

\begin{figure}[h]
	\centering
	{\includegraphics[width=6cm]{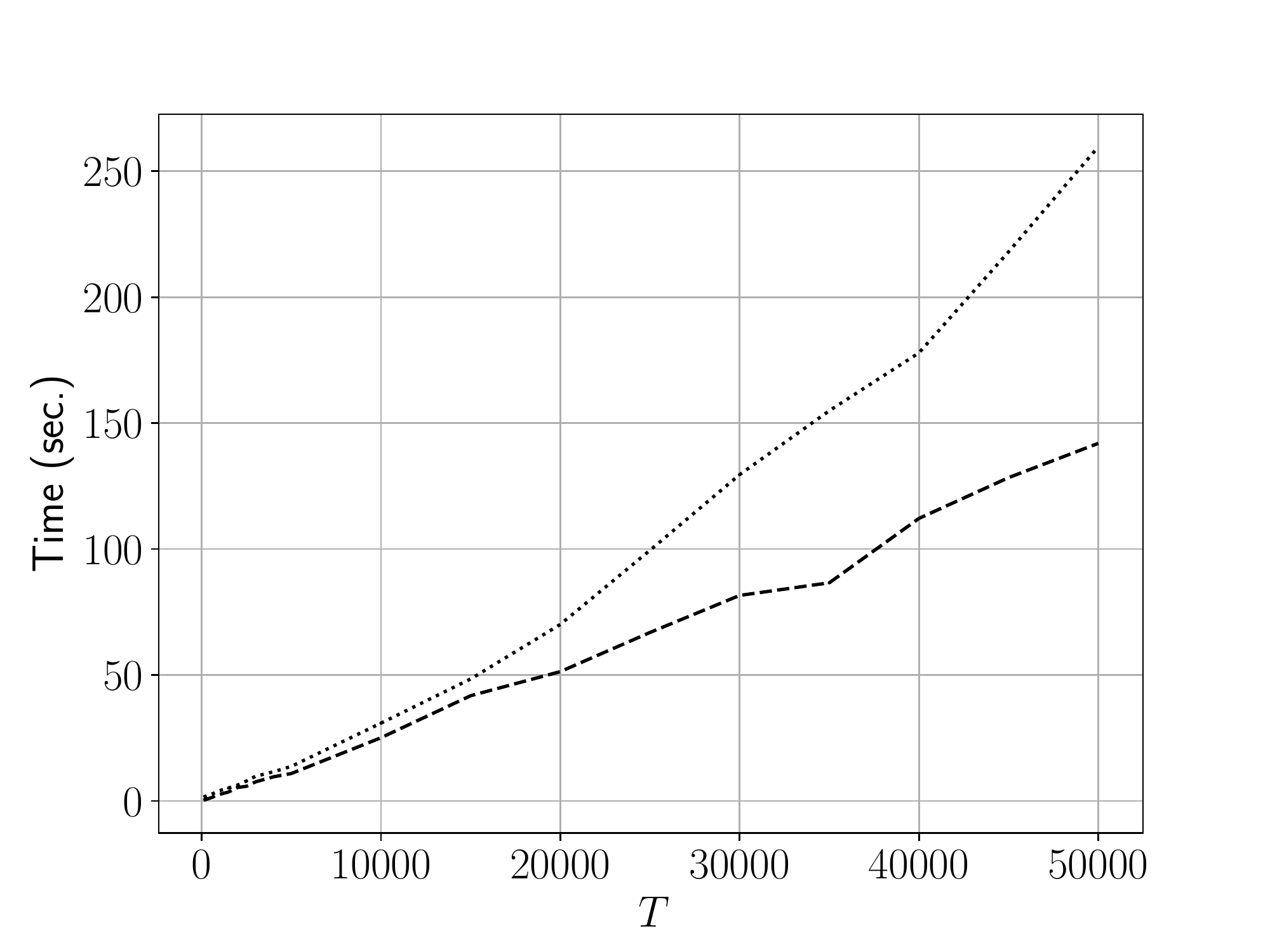} }
	{\includegraphics[width=6cm]{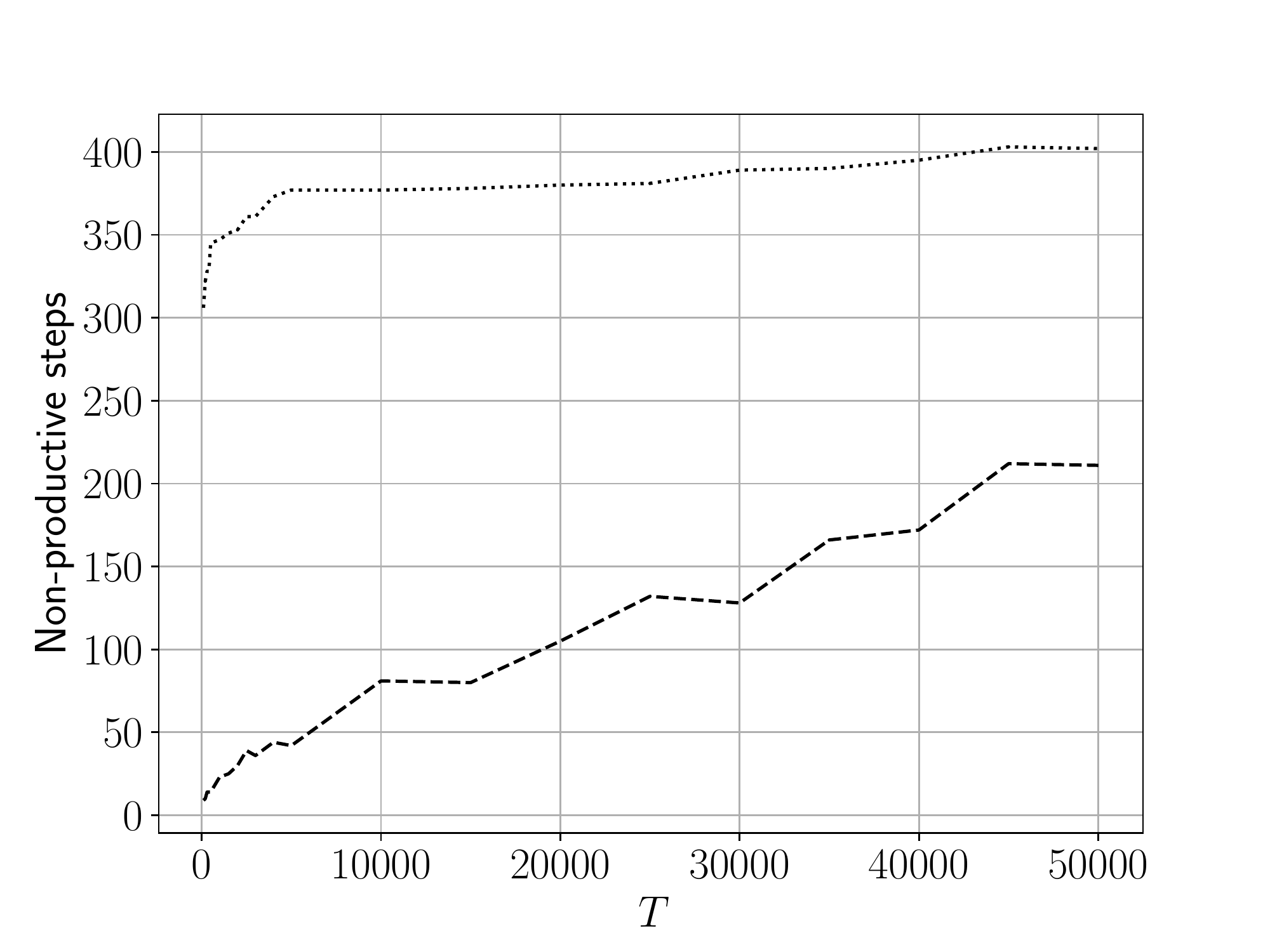} }
	{\includegraphics[width=5.9cm]{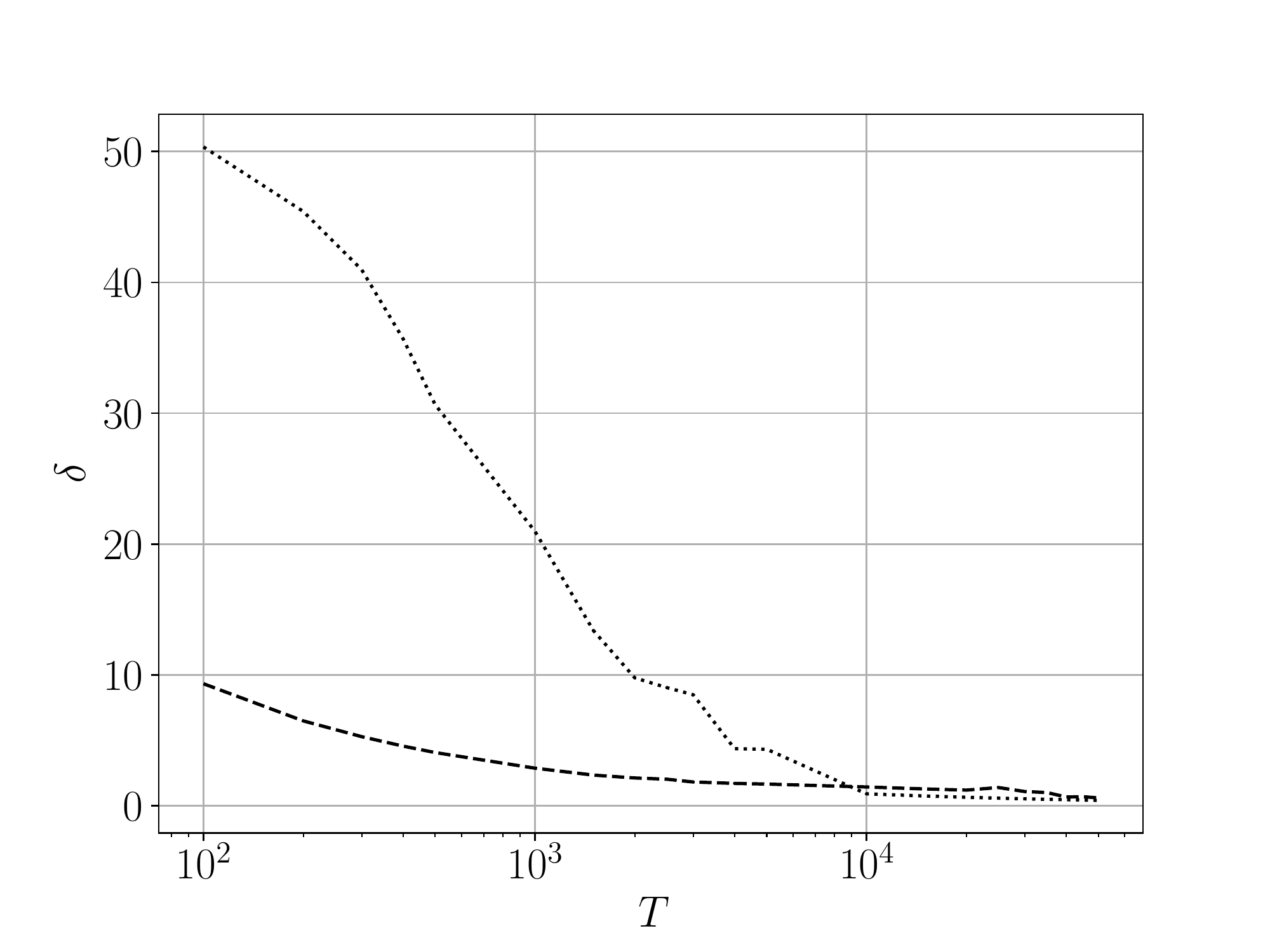} }
         {\includegraphics[width=5.9cm]{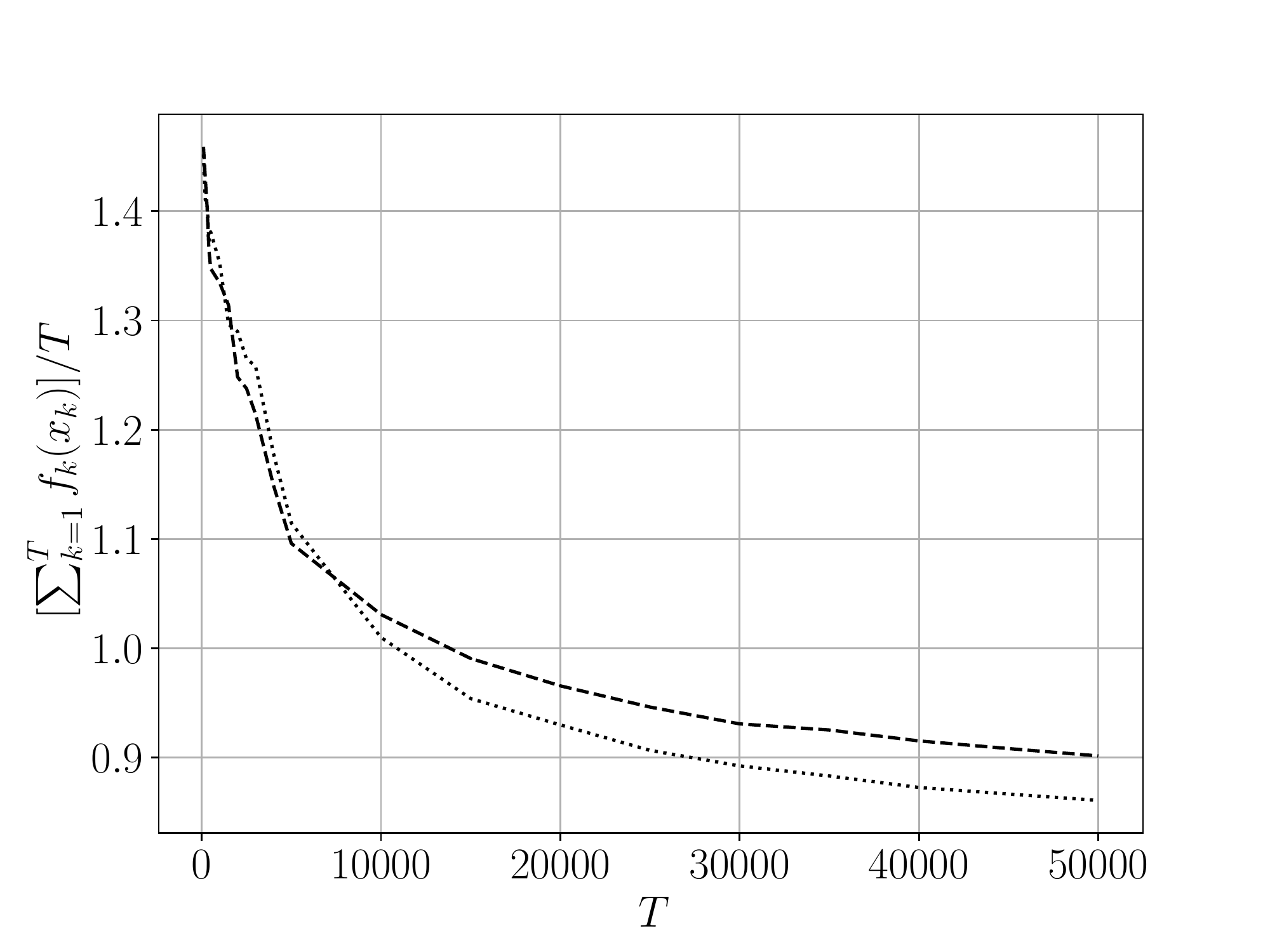} }
	\caption{The results of Algorithm \ref{alg:myaab} (dots) and adaptive Algorithm 2 in \cite{Titov_online} (dashed) for the objective function \eqref{objective_experiments} with constraints \eqref{constraints_g}.}
	\label{fig_results}
\end{figure}

\section*{Conclusions}
In this paper, we considered relatively strongly convex and relatively Lipschitz-continuous \al{constrained} online optimization problems. \al{W}e  proposed some methods with switching \al{over} productive and non-productive steps \al{and provided corresponding}  estimates of the quality of the solution. \al{We also} presented analogues of the methods proposed earlier in \cite{Hazan-Rakhlin}, for solving relatively strongly convex and relatively Lipschitz-continuous online optimization problems with and without regularization. Furthermore, for the problems with functional constraints, we have proposed a scheme with switching \al{over}  productive and non-productive steps with adaptive regularization. We also proved that if the algorithm \al{runs} exactly $T$ productive steps and has a non-negative regret, then the number of non-productive steps  \al{satisfies} $T_J\leq C T$, where $C$ is a constant. In particular, for the proposed methods, we obtained some bounds on the algorithm’s regret in terms of the number of productive steps  \al{made} by the algorithm  under specific assumptions about the parameters of relative strong convexity and \al{some} other parameters of the problem.

The key idea of the considered methods is that at each step of the algorithm for each selected $f_t$, we determine the corresponding parameter of the relative strong convexity $\mu_t$. Thus, it is possible to take into account the parameter of relative strong convexity  \al{of} each of the functions $f_t$. This is highly significant because the functions are selected during the method's working process, and it would be a mistake to assume that some strong convexity can be set initially. It is important to note, that if we consider \al{the following functional constraint} 
$$
g(x)=\max\limits_{1 \leq i \leq m}\{g_i(x)\},
$$
where each $g_i$ is $\mu_i$-relatively strongly convex function, then in the process of working of the algorithm at this particular non-productive step $t$, it makes sense to consider the first of the constraints $g_i(x)$ for which the condition $g_i(x_t)\leq \varepsilon$ is violated and \al{the} corresponding  parameter $\mu_i$, i.e. $\mu_t=\mu_i$. We do not initially know which constraint will be violated in the process of working of the method, and it is logical to take into account its relative strong convexity parameter instead of the global relative strong convexity \al{one}, which may turn out to be much larger. We have analyzed the results of the given numerical experiments and compared the effectiveness of one of the proposed algorithms with Algorithm 2 proposed in \cite{Titov_online}.

\newpage

\section*{Appendix A. The proof of Theorem \ref{t_1}.}
\begin{proof}
    
By the assumption on the functions $f_t$, for $x^*=\arg\min\limits_{x\in Q} \sum_{t=1}^T f_t(x)$ we have
$$
f_t(x_t)-f_t(x^*)\le \langle\nabla f_t(x_t),x_t - x^*\rangle-\mu_tV(x^*,x_t).
$$
By a well-known property of Bregman divergences, it holds that for any vectors $x,y,z$,
$$
\langle x-y,\nabla h(z)-\nabla h(y)\rangle=V(x,y)-V(x,z)+V(y,z).
$$
Combining both observations,
\begin{equation*}
    \begin{aligned}
        f_t(x_t)-f_t(x^*) & \le \langle\nabla f_t(x_t),x_t - x^*\rangle-\mu_tV(x^*,x_t)
        \\&
        =\frac{1}{\eta_{t+1}}\langle\nabla h(y_{t+1})-\nabla h(x_t),x^*-x_t\rangle-\mu_tV(x^*,x_t)
        \\&
        =\frac{1}{\eta_{t+1}}\left[V(x^*,x_t)-V(x^*,y_{t+1})+V(x_t,y_{t+1})\right]-\mu_tV(x^*,x_t)
        \\&
        \le\frac{1}{\eta_{t+1}}\left[V(x^*,x_t)-V(x^*,x_{t+1})+V(x_t,y_{t+1})\right]-\mu_tV(x^*,x_t),
    \end{aligned}
\end{equation*}
where the last inequality follows from the Pythagorean Theorem for Bregman divergences, as $x_{t+1}$ is the projection w.r.t the Bregman divergence of $y_{t+1}$ and $x^*\in Q$ is in the convex set.

Summing over all iterations and recalling that $\eta_{t+1}=\frac{1}{\mu_{1:t}}$,
\begin{equation}\label{X1}
\begin{aligned}
Regret_T & \le \sum\limits_{t=2}^{T} V(x^*,x_t)\left(\frac{1}{\eta_{t+1}}-\frac{1}{\eta_{t}} - \mu_t\right)+V(x^*,x_1)\left(\frac{1}{\eta_2}-\mu_1\right)
\\&
\;\;\;\; +\sum\limits_{t=1}^{T}\frac{1}{\eta_{t+1}}V(x_t,y_{t+1})
=\sum\limits_{t=1}^{T}\frac{1}{\eta_{t+1}}V(x_t,y_{t+1}).
\end{aligned}
\end{equation}
We procced to bound $V(x_t,y_{t+1})$. By the definition of Bregman divergence, and the $M_t$-relative Lipschitz-continuity,
\begin{equation*}
\begin{aligned}
    V(x_t,y_{t+1})+V(y_{t+1},x_t) & =\langle\nabla h(x_{t})-\nabla h(y_{t+1}),x_t-y_{t+1}\rangle
    \\&
    =\eta_{t+1}\langle\nabla f_t(x_t),x_t-y_{t+1}\rangle
    \\&
    \le \eta_{t+1} M_t \sqrt{2 V(y_{t+1},x_t)}
    \\&
    =\sqrt{2M_t^2\eta_{t+1}^2V(y_{t+1},x_t)}
    \\&
    \le M_t^2\eta_{t+1}^2+V(y_{t+1},x_t).
\end{aligned}
\end{equation*}
Thus, we have
$$
V(x_t,y_{t+1})\le M_t^2\eta_{t+1}^2.
$$
Plugging back into \eqref{X1} we get
$$
Regret_T\le\sum\limits_{t=1}^{T}\frac{1}{\eta_{t+1}}V(x_t,y_{t+1})\le \sum\limits_{t=1}^{T}\eta_{t+1}\cdot M_t^2=\sum\limits_{t=1}^{T}\frac{M_t^2}{\mu_{1:t}}.
$$
\end{proof}

\section*{Appendix B. The proof of Theorem \ref{t_2}.}

At the first, let us mention the following auxiliary lemma, which was proposed in \cite{Hazan-Rakhlin}.
\begin{lemma}\label{l_1}
Define
$$
H_T (\{\lambda_t\}) = H_T (\lambda_1, \dots,\lambda_T) = \lambda_{1:T} + \sum\limits_{t=1}^{T} \frac{C_t}{\mu_{1:t} + \lambda_{1:t}},
$$
where $C_t\ge 0$ does not depend on $\lambda_t.$ If $\lambda_t$ satisfies $\lambda_t=\frac{C_t}{\mu_{1:t}+\lambda_{1:t}}$ for $t=1,\dots,T,$ then
$$
H_T(\{\lambda_t\})\le 2\inf\limits_{\{\lambda_t^*\}\ge 0}H_T(\{\lambda_t^*\}).
$$
\end{lemma}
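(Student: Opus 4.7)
The crux is that the fixed-point equation $\lambda_t=C_t/(\mu_{1:t}+\lambda_{1:t})$ forces each summand in the sum $\sum_t C_t/(\mu_{1:t}+\lambda_{1:t})$ appearing in $H_T(\{\lambda_t\})$ to equal $\lambda_t$, so
$$
H_T(\{\lambda_t\}) \;=\; \lambda_{1:T} + \sum_{t=1}^{T}\lambda_t \;=\; 2\lambda_{1:T}.
$$
The target bound $H_T(\{\lambda_t\})\le 2\inf H_T$ therefore reduces to showing the variational inequality
$$
\lambda_{1:T}\;\le\;\lambda_{1:T}^* + \sum_{t=1}^{T}\frac{C_t}{\mu_{1:t}+\lambda_{1:t}^*}\qquad \forall\,\{\lambda_t^*\}\ge 0, \qquad (\ast)
$$
because taking the infimum on the right-hand side of $(\ast)$ yields the lemma. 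Step~1 is the trivial algebraic manipulation; all the content sits in $(\ast)$.

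My plan for $(\ast)$ is induction on $T$. The base case $T=1$ is immediate when $\lambda_1\le\lambda_1^*$; otherwise, clearing denominators and substituting $C_1=\lambda_1(\mu_1+\lambda_1)$, the inequality reduces to
$$
C_1-(\lambda_1-\lambda_1^*)(\mu_1+\lambda_1^*) \;=\; \lambda_1^2-\lambda_1\lambda_1^*+(\lambda_1^*)^2+\mu_1\lambda_1^*\;\ge\;0,
$$
which holds since $a^2-ab+b^2\ge 0$ for all real $a,b$ and $\mu_1\lambda_1^*\ge 0$. For the inductive step I apply the hypothesis to the truncation $\{\lambda_t^*\}_{t=1}^{T-1}$, which leaves the task of bounding the final increment $\lambda_T-\lambda_T^*-C_T/(\mu_{1:T}+\lambda_{1:T}^*)$ by the inductive slack $\lambda_{1:T-1}^*-\lambda_{1:T-1}+\sum_{t<T}C_t/(\mu_{1:t}+\lambda_{1:t}^*)\ge 0$.

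The main obstacle is precisely this inductive step: a naive per-term reduction to the base-case quadratic fails because $\lambda_{1:T-1}^*$ may differ substantially from $\lambda_{1:T-1}$, and the cross term $\lambda_T(\lambda_{1:T-1}^*-\lambda_{1:T-1})$ that appears after clearing denominators can have either sign. I would split into two cases by the sign of $\lambda_{1:T-1}^*-\lambda_{1:T-1}$. When $\lambda_{1:T-1}^*\le \lambda_{1:T-1}$, one shows $\lambda_T\le\lambda_T^*+C_T/(\mu_{1:T}+\lambda_{1:T}^*)$ directly from the fixed-point identity, by considering separately whether $\lambda_T^*\ge\lambda_T$ or $\mu_{1:T}+\lambda_{1:T}^*<\mu_{1:T}+\lambda_{1:T}$. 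When $\lambda_{1:T-1}^*>\lambda_{1:T-1}$, the inductive hypothesis already provides at least $\lambda_{1:T-1}^*-\lambda_{1:T-1}$ units of slack, and a short AM--GM--style calculation with $D=\lambda_{1:T-1}^*-\lambda_{1:T-1}$ and $E_0=\lambda_T-\mu_{1:T}-\lambda_{1:T-1}\le 2\lambda_T$ shows that this slack absorbs the cross term via $D(E_0-D)\le E_0^2/4\le \lambda_T^2$, closing the induction. Together with Step~1 this gives $H_T(\{\lambda_t\})=2\lambda_{1:T}\le 2H_T(\{\lambda_t^*\})$ for every admissible competitor, and the infimum completes the proof.
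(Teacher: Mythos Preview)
The paper does not supply its own proof of this lemma: it is quoted verbatim as an auxiliary result from \cite{Hazan-Rakhlin} and used as a black box in Appendices~B and~C. So there is no in-paper argument to compare yours against; I can only assess your sketch on its own.

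Your reduction to $(\ast)$ is exactly right and is the standard first move: the fixed-point relation collapses $H_T(\{\lambda_t\})$ to $2\lambda_{1:T}$, so everything comes down to the variational inequality $\lambda_{1:T}\le H_T(\{\lambda_t^*\})$. Your base case is clean, and your Case~1 of the inductive step is correct as written (in sub-case~1b you indeed get $\mu_{1:T}+\lambda_{1:T}^*<\mu_{1:T}+\lambda_{1:T}$, hence $C_T/(\mu_{1:T}+\lambda_{1:T}^*)\ge\lambda_T$).

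There is one slip in Case~2. After clearing denominators in
\[
\lambda_T-\lambda_T^*-D\;\le\;\frac{C_T}{\mu_{1:T}+\lambda_{1:T}^*}
\]
and substituting $C_T=\lambda_T(a+\lambda_T)$ with $a:=\mu_{1:T}+\lambda_{1:T-1}$, the quadratic that actually appears is
\[
b(E_0-b)\;\le\;\lambda_T^2,\qquad b:=D+\lambda_T^*,\qquad E_0:=\lambda_T-a,
\]
not $D(E_0-D)$ as you wrote; the $\lambda_T^*$ term cannot be dropped. Fortunately the same AM--GM step you describe works verbatim with $b$ in place of $D$: if $E_0\le 0$ then $b(E_0-b)\le 0$, and if $E_0>0$ then $b(E_0-b)\le E_0^2/4\le \lambda_T^2$ because $a\ge 0$ already gives $E_0\le \lambda_T$ (sharper than your $E_0\le 2\lambda_T$, though either suffices). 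With this correction the induction closes and your proof is complete.
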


Now, let us prove Theorem \ref{t_2}.

\begin{proof}
By assumption on the functions $f_t$ and $d$, for $x^*=\arg\min\limits_{x\in Q} \sum_{t=1}^T f_t(x)$ we have
$$
    f_t(x_t)-f_t(x^*)\le \langle\nabla f_t(x_t),x_t - x^*\rangle-\mu_tV(x^*,x_t),
$$
and
$$
    d(x_t)-d(x^*)\le \langle\nabla d(x_t),x_t - x^*\rangle-V(x^*,x_t).
$$
Summing these two inequalities, we have
\begin{equation*}
    \begin{aligned}
        (f_t(x_t)+\lambda_t d(x_t))-(f_t(x^*)+\lambda_td(x^*)) & \le \langle\nabla f_t(x_t)+\lambda_t\nabla d(x_t),x_t - x^*\rangle
        \\&
         \;\;\;\; -(\mu_t+\lambda_t)V(x^*,x_t).
    \end{aligned}
\end{equation*}

By a well-known property of Bregman divergences, it holds that for any vectors $x,y,z$,
$$
    \langle x-y,\nabla h(z)-\nabla h(y)\rangle=V(x,y)-V(x,z)+V(y,z).
$$
Combining both observations,
\begin{equation*}
    \begin{aligned}
        & \quad  (f_t(x_t)+\lambda_t d(x_t))-(f_t(x^*)+\lambda_td(x^*))
        \\& \le \langle\nabla f_t(x_t)+\lambda_t\nabla d(x_t),x_t - x^* \rangle
        - (\mu_t + \lambda_t) V(x^*,x_t)
        \\&
        =\frac{1}{\eta_{t+1}}\langle\nabla h(y_{t+1})-\nabla h(x_t),x^*-x_t\rangle-(\mu_t+\lambda_t)V(x^*,x_t)
        \\&
        =\frac{1}{\eta_{t+1}}\left[V(x^*,x_t)-V(x^*,y_{t+1})+V(x_t,y_{t+1})\right]-(\mu_t+\lambda_t)V(x^*,x_t)
        \\&
        \le\frac{1}{\eta_{t+1}}\left[V(x^*,x_t)-V(x^*,x_{t+1})+V(x_t,y_{t+1})\right]-(\mu_t+\lambda_t)V(x^*,x_t),
    \end{aligned}
\end{equation*}

where the last inequality follows from the Pythagorean theorem for Bregman divergences, as $x_{t+1}$ is the projection w.r.t the Bregman divergence of $y_{t+1}$ and $x^*\in Q$ is in the convex set.

Summing over all iterations and recalling that $\eta_{t+1}=\frac{1}{\mu_{1:t}+\lambda_{1:t}}$,
\begin{equation}\label{X2}
\begin{aligned}
 & \sum\limits_{t=1}^{T}(f_t(x_t)+\lambda_t d(x_t))-\sum\limits_{t=1}^{T} (f_t(x^*)+\lambda_t d(x^*))
 \\& \le
 \sum\limits_{t=2}^{T}V(x^*,x_t)\left(\frac{1}{\eta_{t+1}}-\frac{1}{\eta_{t}}-\mu_t-\lambda_t\right) + V(x^*,x_1)\left(\frac{1}{\eta_2}-\mu_1-\lambda_1\right)
\\&
\;\;\;\; +\sum\limits_{t=1}^{T}\frac{1}{\eta_{t+1}}V(x_t,y_{t+1})
= \sum\limits_{t=1}^{T}\frac{1}{\eta_{t+1}}V(x_t,y_{t+1}).
\end{aligned}
\end{equation}
We proceed to bound $V(x_t,y_{t+1})$. By the definition of Bregman divergence, and the relative Lipschitz-continuity,
\begin{equation*}
    \begin{aligned}
        V(x_t,y_{t+1})+V(y_{t+1},x_t) & = \langle\nabla h(x_{t})-\nabla h(y_{t+1}),x_t-y_{t+1}\rangle
        \\&
        =\eta_{t+1}\langle\nabla f_t(x_t) +\lambda_t \nabla d(x_t),x_t-y_{t+1}\rangle
        \\&
        \le \eta_{t+1}M_t\sqrt{2V(y_{t+1},x_t)}
        +\lambda_t\eta_{t+1}M_d\sqrt{2V(y_{t+1},x_t)}
        \\&
        =(M_t+\lambda_tM_d)\sqrt{2\eta_{t+1}^2V(y_{t+1},x_t)} \\&
        = \sqrt{2 (M_t+\lambda_tM_d)^2 \eta_{t+1}^2V(y_{t+1},x_t)}
        \\&
        \le (M_t + \lambda_t M_d)^2 \eta_{t+1}^2 + V(y_{t+1}, x_t).
    \end{aligned}
\end{equation*}
Thus, we have
$$
V(x_t,y_{t+1})\le (M_t+\lambda_tM_d)^2\eta_{t+1}^2.
$$
Plugging back into \eqref{X2} we get
\begin{equation*}
\begin{aligned}
    & \sum\limits_{t=1}^{T}(f_t(x_t)+\lambda_t d(x_t))-\sum\limits_{t=1}^{T} (f_t(x^*)+\lambda_t d(x^*))\le \sum\limits_{t=1}^{T}\frac{1}{\eta_{t+1}}V(x_t,y_{t+1})\le
    \\&
    \le\sum\limits_{t=1}^{T}\eta_{t+1} (M_t+\lambda_tM_d)^2=\sum\limits_{t=1}^{T}\frac{(M_t+\lambda_tM_d)^2}{\mu_{1:t}+\lambda_{1:t}}.
\end{aligned}
\end{equation*}
Thus, we have
$$
\sum\limits_{t=1}^{T}(f_t(x_t)+\lambda_t d(x_t))\leq\min\limits_x\left(\sum\limits_{t=1}^{T} (f_t(x)+\lambda_t d(x))\right)+\sum\limits_{t=1}^{T}\frac{(M_t+\lambda_tM_d)^2}{\mu_{1:t}+\lambda_{1:t}}.
$$
Dropping the $d(x_t)$ terms and bounding $d(x^*)\le A^2$, we have
$$
\sum\limits_{t=1}^{T}f_t(x_t)\le\sum\limits_{t=1}^{T} f_t(x^*)+\lambda_{1:T}A^2+\sum\limits_{t=1}^{T}\frac{(M_t+\lambda_tM_d)^2}{\mu_{1:t}+\lambda_{1:t}},
$$
hence
\begin{equation}\label{tttt}
    Regret_T \le \lambda_{1:T}A^2 + \sum\limits_{t=1}^{T} \frac{(M_t+ \lambda_t M_d)^2}{\mu_{1:t} + \lambda_{1:t}}.
\end{equation}

The following inequality allows us to remove the dependence on $\lambda_t$ from the numerator of the second sum in \eqref{tttt}. We have
\begin{equation}\label{X3}
\begin{aligned}
\lambda_{1:T}A^2+\sum\limits_{t=1}^{T}\frac{(M_t+\lambda_tM_d)^2}{\mu_{1:t}+\lambda_{1:t}} & \le \lambda_{1:T}A^2+\sum\limits_{t=1}^{T}\left(\frac{2M_t^2}{\mu_{1:t}+\lambda_{1:t}}+\frac{2\lambda_t^2M_d^2}{\mu_{1:t}+\lambda_{1:t-1}+\lambda_t}\right)
\\&
\le (A^2+2M_d^2)\lambda_{1:T}+2\sum\limits_{t=1}^{T}\frac{M_t^2}{\mu_{1:t}+\lambda_{1:t}}.
\end{aligned}
\end{equation}

By \eqref{X3} and Lemma \ref{l_1}, we have
\begin{equation*}
    \begin{aligned}
        Regret_T & \le (A^2+2M_d^2) \lambda_{1:T} + 2 \sum\limits_{t=1}^{T}\frac{M_t^2}{\mu_{1:t}+\lambda_{1:t}}
        \\&
        \le \inf\limits_{\lambda_1^*, \dots,\lambda_T^*} \left(2(A^2+2M_d^2)\lambda_{1:T}^* + 4 \sum\limits_{t=1}^{T} \frac{M_t^2}{\mu_{1:t} + \lambda_{1:t}^*}\right)
        \\&
        \le 2 \inf\limits_{\lambda_1^*, \dots,\lambda_T^*} \left((A^2 + 2 M_d^2)\lambda_{1:T}^* + \sum\limits_{t=1}^{T}\frac{(M_t +\lambda_t^*M_d)^2}{\mu_{1:t}+\lambda_{1:t}^*}\right),
    \end{aligned}
\end{equation*}
provided the $\lambda_t$ are chosen as solutions to
$$
(A^2+2M_d^2)\lambda_{t}=\frac{2M_t^2}{\mu_{1:t}+\lambda_{1:t-1}+\lambda_t}.
$$

It is easy to verify that
$$
\lambda_t=\frac{1}{2}\left(\sqrt{(\mu_{1:t}+\lambda_{1:t-1})^2+8M_t^2/(A^2+2M_d^2)}-(\mu_{1:t}+\lambda_{1:t-1})\right)
$$
is the non-negative root of the above quadratic equation.
\end{proof}

\section*{Appendix C. The proof of Theorem \ref{Thm5}.}

\begin{proof}
By assumption on the functions $f_t$ and $d$ for every productive step we have
\begin{equation*}
    \begin{aligned}
        & \eta_t((f_t(x_t)+\lambda_t d(x_t))-(f_t(x^*)+\lambda_td(x^*)))
        \\&
        \leq \eta_t(\langle \nabla f_t(x_t)+\lambda_t\nabla d(x_t), x_t -x^*\rangle - (\mu_t+\lambda_t) V(x^*,x_t))
        \\&
        \leq \eta_t^2(M_t+\lambda_tM_d)^2+V(x^*,x_t)-V(x^*,x_{t+1})-\eta_t(\mu_t+\lambda_t) V(x^*,x_t).
    \end{aligned}
\end{equation*}
Hence, after dividing both sides of the above inequality by $\eta_t$ we get
\begin{equation*}
    \begin{aligned}
        & \quad (f_t(x_t)+\lambda_t d(x_t))-(f_t(x^*)+\lambda_td(x^*))
        \\&
        \leq\eta_t(M_t+\lambda_tM_d)^2+\frac{1}{\eta_t}\left(V(x^*,x_t)-V(x^*,x_{t+1})\right)-(\mu_t+\lambda_t)V(x^*,x_t)
        \\&
        = \frac{(M_t+\lambda_tM_d)^2}{\mu_{1:t}+\lambda_{1:t}} + (\mu_{1:t}+\lambda_{1:t})V(x^*,x_t)-(\mu_t+\lambda_t) V(x^*,x_t) -
        \\& \;\;\;\; -(\mu_{1:t}+\lambda_{1:t})V(x^*,x_{t+1})
        \\&
        = \frac{(M_t + \lambda_t M_d)^2}{\mu_{1:t} + \lambda_{1:t}} + (\mu_{1:t-1} + \lambda_{1:t-1}) V(x^*, x_t) -(\mu_{1:t} + \lambda_{1:t}) V(x^*, x_{t+1}).
    \end{aligned}
\end{equation*}

Similarly, taking into account the $M_g$-relative Lipschitz-continuity of $g$ and the $M_d$-relative Lipschitz-continuity of $d$  for every non-productive step we have $g(x_t) > \varepsilon$, and
\begin{equation*}
    \begin{aligned}
       \eta_t\varepsilon & < \eta_t((g(x_t)+\lambda_t d(x_t))-(g(x^*)+\lambda_td(x^*)))
        \\&
        \leq\eta_t \left(\langle\nabla g(x_t)+\lambda_t\nabla d(x_t), x_t -x^*\rangle - (\mu_t+\lambda_t) V(x^*,x_t) \right)
        \\&
        \leq \eta_t^2 (M_g + \lambda_t M_d)^2 + V(x^*,x_t) - V(x^*, x_{t+1}) - \eta_t (\mu_t + \lambda_t) V(x^*, x_t).
    \end{aligned}
\end{equation*}

Dividing both sides of the last inequality by $\eta_t$, we get:
\begin{equation*}
    \begin{aligned}
        \varepsilon & < (g(x_t)+\lambda_t d(x_t))-(g(x^*)+\lambda_td(x^*)
        \\&
        \leq \eta_t (M_g+\lambda_tM_d)^2 +\frac{1}{\eta_t}\left(V(x^*,x_t)-V(x^*,x_{t+1})\right)-(\mu_t+\lambda_t)V(x^*,x_t)
        \\&
        = \frac{(M_g+\lambda_tM_d)^2}{\mu_{1:t}+\lambda_{1:t}} + (\mu_{1:t}+\lambda_{1:t})V(x^*,x_t)-(\mu_t+\lambda_t)V(x^*,x_t) -
        \\& \;\;\;\;- (\mu_{1:t}+\lambda_{1:t}) V(x^*,x_{t+1})
        \\&
        =\frac{(M_g + \lambda_tM_d)^2}{\mu_{1:t} + \lambda_{1:t}} + (\mu_{1:t-1} + \lambda_{1:t-1})V(x^*,x_t) -(\mu_{1:t} + \lambda_{1:t}) V(x^*, x_{t+1}).
    \end{aligned}
\end{equation*}

Summing up the inequalities for productive and non-productive steps, and let $M = \max\{M_t,M_g\}$, then
\begin{equation*}
    \begin{aligned}
        & \sum_{t \in I} ((f_t(x_t)+\lambda_t d(x_t))-(f_t(x^*)+\lambda_td(x^*))) + \sum_{t \in J} ((g(x_t)+\lambda_t d(x_t))-(g(x^*)+\lambda_td(x^*)))
        \\&
        \leq  \sum_{t=1}^{ T+T_J} \left( \frac{(M+\lambda_tM_d)^2}{\mu_{1:t}+\lambda_{1:t}} + (\mu_{1:t-1}+\lambda_{1:t-1})V(x^*,x_t)-(\mu_{1:t}+\lambda_{1:t})V(x^*,x_{t+1})\right)
        \\&
        \leq \sum_{t=1}^{ T+T_J}\frac{(M+\lambda_tM_d)^2}{\mu_{1:t}+\lambda_{1:t}} - (\mu_{1:T+T_J}+\lambda_{1:T+T_J})V(x^*,x_{T+T_J})
        \\&
        \leq \sum_{t=1}^{ T+T_J}\frac{(M+\lambda_tM_d)^2}{\mu_{1:t}+\lambda_{1:t}}.
    \end{aligned}
\end{equation*}

Bounding $d(x^*)\le A^2$ and using the fact, that for non-productive steps
$$ g(x_t) - g(x^*) \ge g(x_t) > \varepsilon, $$
we get an estimate for the sum of the objective functionals:
\begin{equation*}
    \begin{aligned}
        \sum_{t=1}^T (f_t(x_t) - f_t(x^*)) & = \sum\limits_{t =1}^T f_t(x_t) - \min\limits_{x\in Q}\sum\limits_{t=1}^T f_t(x)
        \\&
        \leq \sum_{t=1}^{ T+T_J}\frac{(M+\lambda_tM_d)^2}{\mu_{1:t} +\lambda_{1:t}} - \sum_{t \in J} (g(x_t) - g(x^*))
        \\&
        \;\;\;\; +\sum\limits_{t =1}^{T+T_J}\lambda_td(x^*)-\sum\limits_{t =1}^{T+T_J}\lambda_td(x_t)
        \\&
        \leq\sum_{t=1}^{ T+T_J}\frac{(M+\lambda_tM_d)^2}{\mu_{1:t}+\lambda_{1:t}}+\lambda_{1:T+T_J}A^2 - \varepsilon T_J.
    \end{aligned}
\end{equation*}

Thus, we get
$$0\leq Regret_T\leq\sum_{t=1}^{ T+T_J}\frac{(M+\lambda_tM_d)^2}{\mu_{1:t}+\lambda_{1:t}}+\lambda_{1:T+T_J}A^2 - \varepsilon T_J.$$
Using inequality \eqref{X3}, we have
\begin{equation}\label{X33}
\begin{aligned}
&\quad \lambda_{1:T+T_J}A^2 + \sum\limits_{t=1}^{T+T_J} \frac{(M +\lambda_tM_d)^2}{\mu_{1:t} + \lambda_{1:t}} - \varepsilon T_J
\\&
\le \lambda_{1:T+T_J}A^2+\sum\limits_{t=1}^{T+T_J}\left(\frac{2M^2}{\mu_{1:t}+\lambda_{1:t}}+\frac{2\lambda_t^2M_d^2}{\mu_{1:t}+\lambda_{1:t-1}+\lambda_t}\right)-\varepsilon T_J
\\&
\le (A^2+2M_d^2)\lambda_{1:T+T_J}+2\sum\limits_{t=1}^{T+T_J}\frac{M^2}{\mu_{1:t}+\lambda_{1:t}}-\varepsilon T_J.
\end{aligned}
\end{equation}
By \eqref{X33} and Lemma \ref{l_1}
\begin{equation*}
    \begin{aligned}
    Regret_T & \le(A^2+2M_d^2) \lambda_{1:T+T_J} + 2 \sum\limits_{t=1}^{T+T_J} \frac{M^2}{\mu_{1:t}+\lambda_{1:t}}-\varepsilon T_J
    \\&
    \le\inf\limits_{\lambda_1^*,\dots,\lambda_{T+T_J}^*}\left(2(A^2+2M_d^2)\lambda_{1:T+T_J}^*+4\sum\limits_{t=1}^{T+T_J}\frac{M^2}{\mu_{1:t}+\lambda_{1:t}^*}\right)-\varepsilon T_J
    \\&
    \le 2 \inf\limits_{\lambda_1^*, \dots,\lambda_{T+T_J}^*} \left((A^2+2M_d^2) \lambda_{1:T+T_J}^* + \sum\limits_{t=1}^{T + T_J} \frac{(M + \lambda_t^*M_d)^2}{\mu_{1:t}+\lambda_{1:t}^*}\right)-\varepsilon T_J.
    \end{aligned}
\end{equation*}
provided the $\lambda_t$ are chosen as solutions to
$$
(A^2+2M_d^2)\lambda_{t}=\frac{2M^2}{\mu_{1:t}+\lambda_{1:t-1}+\lambda_t}.
$$

It is easy to verify that
$$
\lambda_t=\frac{1}{2}\left(\sqrt{(\mu_{1:t}+\lambda_{1:t-1})^2+8M^2/(A^2+2M_d^2)}-(\mu_{1:t}+\lambda_{1:t-1})\right)
$$
is the non-negative root of the above quadratic equation.
\end{proof}



\section*{Appendix D. The proof of Corollary \ref{cor_4}.}

\begin{proof}
1. Indeed, if $\lambda_t=0 \; \forall 1\leq t\leq T+T_J$, then the claimed statement immediately follows from Corollary \ref{cor_1}.

2. Indeed, if $\lambda_1=\sqrt{T+T_J}$ and $\lambda_t=0$ for $1<t\leq T+T_J,$ then
\begin{equation*}
    \begin{aligned}
        0 & \leq Regret_T \le \lambda_{1:T+T_J}A^2 + \sum\limits_{t=1}^{T+T_J} \frac{( M + \lambda_tM_d)^2}{\mu_{1:t}+\lambda_{1:t}}-\varepsilon T_J
        \\&
        \le(A^2+2M_d^2)\lambda_{1:T+T_J}+2\sum\limits_{t=1}^{T+T_J}\frac{M^2}{\mu_{1:t}+\lambda_{1:t}}-\varepsilon T_J
        \\&
        \leq(A^2+2M_d^2)\sqrt{T+T_J} +2\sum\limits_{t=1}^{T+T_J}\frac{M^2}{\sqrt{T+T_J}}-\varepsilon T_J
        \\&
        =\left(A^2+2(M_d^2+M^2)\right)\sqrt{T+T_J}-\varepsilon T_J,
    \end{aligned}
\end{equation*}
hence $\varepsilon T_J\leq\left(A^2+2(M_d^2+M^2)\right)\sqrt{T+T_J}.$ Let  $\varepsilon=\dfrac{A^2+2(M_d^2+M^2)}{\sqrt{T}}$.  Then we get
$$
\dfrac{T_J}{T}\leq \sqrt{\dfrac{T+T_J}{T}}=\sqrt{1+\dfrac{T_J}{T}}.
$$
Since the linear function grows faster than the square root function, it is obviously, that with a sufficiently large $T_J$, the above inequality does not hold, and then $\dfrac{T_J}{T}$ is bounded. Thus we proved that $\exists \  C>0:\  T_J \leq C \cdot T$.
So, we have $$ Regret_T\leq \left(A^2+2(M_d^2+M^2)\right)\sqrt{(C+1)T}=O(\sqrt{T}).$$

3.  \al{Let us} assume
$\lambda_1 = (T+T_J)^{\alpha}, \lambda_t=0, \; \forall 1\leq t \leq T+T_J$. Note that
$$
    \mu_{1:t}:=\sum\limits_{s=1}^{t}\mu_s\geq \int\limits_{0}^{t-1}(x+1)^{-\alpha}dx=(1-\alpha)^{-1}\left(t^{1-\alpha}-1\right). $$
Hence
\begin{equation*}
    \begin{aligned}
      0 & \leq Regret_T\leq \lambda_{1:T+T_J}A^2+\sum\limits_{t=1}^{T+T_J}\frac{(M+\lambda_tM_d)^2}{\mu_{1:t}+\lambda_{1:t}}-\varepsilon T_J
      \\&
      \le(A^2+2M_d^2) \lambda_{1:T+T_J} + 2 \sum\limits_{t=1}^{T+T_J} \frac{M^2}{\mu_{1:t} + \lambda_{1:t}} - \varepsilon T_J
      \\&
      \leq (A^2+2M_d^2)(T+T_J)^{\alpha}+2 M^2 (1-\alpha) \sum_{t=1}^{T+T_J}\frac{1}{\left(t^{1-\alpha}-1\right)} - \varepsilon T_J
      \\&
      \leq (A^2 + 2 M_d^2)(T+T_J)^{\alpha}+4 M^2\frac{1}{\alpha}(T + T_J)^\alpha + O(1) -\varepsilon T_J.
    \end{aligned}
\end{equation*}

Then we have $\varepsilon T_J\leq(A^2+2M_d^2+4M^2\frac{1}{\alpha})(T+T_J)^\alpha.$ Let $\varepsilon=(A^2+2M_d^2+4M^2\frac{1}{\alpha})\dfrac{T^\alpha}{T},$ then
$$
\dfrac{T^\alpha}{T}T_J\leq (T+T_J)^\alpha,
$$
and
$$
\dfrac{T_J}{T}\leq \left(\dfrac{T+T_J}{T}\right)^\alpha=\left(1+\dfrac{T_J}{T}\right)^\alpha.
$$
It is obviously, that with a sufficiently large $T_J$, the above inequality does not hold, and then $\exists \  C>0:\  T_J \leq C \cdot T$.
Thus, we have $$ Regret_T=O((T+T_J)^\alpha)=O(((C+1)T)^\alpha)=O(T^\alpha).$$
\end{proof}

\end{document}